\title{Linear forms of a given Diophantine type \\ and lattice exponents.
       \thanks{ This research was supported in part by RFBR grant 18-01-00886 and also by Young Russian Mathematics award}}
\author{Oleg\,N.\,German}
\date{}
\theoremstyle{definition}
\newtheorem{definition}{Definition}
\newtheorem*{notation*}{Notation}
\theoremstyle{remark}
\newtheorem*{remark*}{Remark}
\theoremstyle{plain}
\newtheorem{theorem}{Theorem}
\newtheorem{lemma}{Lemma}
\newtheorem{corollary}{Corollary}
\newtheorem*{statement*}{Statement}
\newtheorem*{corollary*}{Corollary}
\DeclareMathOperator{\spanned}{span}
\DeclareMathOperator{\interior}{int}
\renewcommand{\phi}{\varphi}
\renewcommand{\vec}[1]{\mathbf{#1}}
\renewcommand{\geq}{\geqslant}
\renewcommand{\leq}{\leqslant}
\newcommand{\e}{\varepsilon}
\newcommand{\R}{\mathbb{R}}
\newcommand{\Z}{\mathbb{Z}}
\newcommand{\Q}{\mathbb{Q}}
\newcommand{\N}{\mathbb{N}}
\newcommand{\La}{\Lambda}
\newcommand{\cC}{\mathcal{C}}
\newcommand{\cD}{\mathcal{D}}
\newcommand{\cL}{\mathcal{L}}
\newcommand{\cP}{\mathcal{P}}
\newcommand{\cV}{\mathcal{V}}
\newcommand{\gS}{\mathfrak{S}}
\newcommand{\SL}{\textup{SL}}
\begin{document}

\maketitle

\begin{abstract}
  In this paper we prove an existence theorem concerning linear forms of a given Diophantine type and apply it to study the structure of the spectrum of lattice exponents.
\end{abstract}


\section{Introduction}

Let $\cL_d$ denote the space of full rank lattices in $\R^d$ of covolume $1$, $\cL_d\cong\SL_d(\R)/\SL_d(\Z)$. According to Mahler's compactness criterion (see \cite{cassels_GN}) the orbit $\cD_d\La$ of a lattice $\La\in\cL_d$ under the action of the group of diagonal matrices
\[\cD_d=\Big\{\textup{diag}(e^{t_1},\ldots,e^{t_d})\,\Big|\,t_i\in\R,\ \sum_{i=1}^dt_i=0 \Big\}\]
is relatively compact if and only if the function
\[\Pi(\vec x)=\prod_{1\leq i\leq d}|x_i|^{1/d},\qquad\vec x=(x_1,\ldots,x_d)\in\R^d,\]
is bounded away from zero at nonzero points of $\La$.
%
%
In case the orbit is not relatively compact, it is natural to ask how fast it can leave any given compact set, or in other words, how fast $\Pi(\vec x)$ can tend to zero as $\vec x$ ranges through nonzero lattice points. The simplest quantity describing the asymptotic behaviour of $\Pi(\vec x)$ is the \emph{Diophantine exponent} of a lattice.

\begin{definition} \label{d:lattice_exponent}
  Let $\La$ be a lattice of full rank in $\R^d$.
  The \emph{Diophantine exponent} of $\La$ is defined as
  \[\omega(\La)=\sup\Big\{\gamma\in\R\ \Big|\,\Pi(\vec x)\leq|\vec x|^{-\gamma}\text{ for infinitely many }\vec x\in\La \Big\},\]
  where $|\cdot|$ is the Euclidean norm.
\end{definition}

Equivalently,
\begin{equation*} 
  \omega(\La)=
  \limsup_{\substack{\vec v\in\La \\ |\vec v|\to\infty}}\frac{\log\big(\Pi(\vec v)^{-1}\big)}{\log|\vec v|}\,.
\end{equation*}
Clearly, Definition \ref{d:lattice_exponent} does not depend on the choice of the norm.

\paragraph{Spectrum of lattice exponents.}

One of the first questions concerning lattice exponents is what values this quantity can attain. It follows from the definition of $\Pi(\vec x)$ that for each positive $t$ we have $\omega(t\La)=\omega(\La)$. Thus, all possible values of $\omega(\La)$ are provided by $\cL_d$, so, we can define the corresponding spectrum as
\[\Omega_d=\Big\{\omega(\Lambda)\,\Big|\,\Lambda\in\cL_d \Big\}.\]
Minkowski's convex body theorem implies the trivial bound
\[\omega(\La)\geq0.\]
Another trivial observation is that $\omega(\La)=0$ whenever $\Pi(\vec x)$ is bounded away from zero at nonzero points of $\La$. For instance, this holds for any lattice of a complete module in a totally real algebraic extension of $\Q$ (see \cite{borevich_shafarevich}). Due to Schmidt's subspace theorem \cite{schmidt_subspace} we also have $\omega(\La)=0$ for a much wider class of algebraic lattices satisfying certain independence conditions (see \cite{skriganov_1998}, \cite{german_2017}). It is worth mentioning that, same as with real numbers, such an algebraic lattice behaves as an average unimodular lattice. Namely, it was shown by Skriganov \cite{skriganov_1998} that for almost every $\La\in\cL_d$ we have
\[\Pi(\vec x)^d\gg_{\La,\e}(\log(1+|\vec x|))^{1-d-\e}\quad\text{ for }\vec x\in\La\backslash\{\vec 0\}.\]
Thus, for almost every $\La\in\cL_d$ we have $\omega(\La)=0$. Later, D.\,Kleinbock and G.\,Margulis completed Skriganov's theorem to a proper multidimensional multiplicative generalization of Khintchine’s theorem (see \cite{kleinbock_margulis_1999}).

It seems 
very natural to expect
that every nonnegative value is attainable by lattice exponents, i.e. that $\Omega_d=[0,+\infty]$. However, until recently the only examples of lattices with positive finite $\omega(\La)$ known to the author were the ones described in \cite{german_2017}. Those lattices give the values
\begin{equation} \label{eq:spectrum_from_schmidt}
  \frac{\,ab\,}{cd}\,,\qquad
  \begin{array}{l}
    a,b,c\in\N, \\
    a+b+c=d.
  \end{array}
\end{equation}
The question whether $\Omega_d=[0,+\infty]$ for $d\geq3$ is still open. In the two-dimensional case it is trivially true due to the theory of continued fractions (see Section \ref{sec:2dim}). In the current paper we prove that at least starting with some positive boundary every real number is contained in $\Omega_d$.

The following statement is the main result of the paper.


\begin{theorem} \label{t:rays}
  For each $d\geq3$
  \[\bigg[3-\frac{d}{(d-1)^2}\,,\,+\infty\bigg]\subset\Omega_d\,.\]
\end{theorem}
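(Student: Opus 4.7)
The natural plan is to realise each target exponent $\omega^\ast$ in the claimed interval as $\omega(\La)$ for an explicit unimodular lattice $\La\in\cL_d$. The construction should combine two ingredients: (i) the main existence theorem of the paper (the announced technical result on linear forms), which for any $\mu$ in a suitable range produces a linear form in $d-1$ variables of Diophantine exponent exactly $\mu$; and (ii) an algebraic lattice from a totally real number field of degree $d-1$, which supplies the norm bound $\prod|\sigma_i(\omega)|\geq1$ and prevents any nonzero lattice point from having a vanishing coordinate.

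Concretely, fix a totally real field $K$ of degree $d-1$ with integral basis $\omega_1,\ldots,\omega_{d-1}$ and real embeddings $\sigma_1,\ldots,\sigma_{d-1}$, normalised so that $\sigma(\cO_K)$ has unit covolume. Given $\vec\alpha\in\R^{d-1}$ of linear-form Diophantine exponent exactly $\mu$ and a generic twist $\vec\gamma\in\R^{d-1}$, I would take $\La\subset\R^d$ to be the rank-$d$ lattice spanned by
\[
\vec b_j = \bigl(\sigma_1(\omega_j),\ldots,\sigma_{d-1}(\omega_j),\alpha_j\bigr),\ j=1,\ldots,d-1,\qquad
\vec b_d = (\gamma_1,\ldots,\gamma_{d-1},1).
\]
A general lattice vector then reads $\vec v=(\sigma_1(\omega)+y\gamma_1,\ldots,\sigma_{d-1}(\omega)+y\gamma_{d-1},L(\vec x)+y)$ with $\omega=\sum_j x_j\omega_j\in\cO_K$, $y\in\Z$, and
\[
\Pi(\vec v)^d = \prod_{i=1}^{d-1}\bigl|\sigma_i(\omega)+y\gamma_i\bigr|\cdot|L(\vec x)+y|;
\]
genericity of $\vec\gamma$ ensures every coordinate is nonzero, so $\omega(\La)$ is not trivially infinite.

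Computing $\omega(\La)$ is the main task. For each good approximation $|L(\vec x)+y|\leq|\vec x|^{-\mu}$ supplied by the existence theorem, one uses the action of $\cO_K^\times$ (which preserves $\prod|\sigma_i(\omega)|$) to select a companion $\omega$ making several of the factors $|\sigma_i(\omega)+y\gamma_i|$ simultaneously small; the norm bound $\prod|\sigma_i(\omega)|\geq1$ then saturates and redistributes the ``missing mass'' optimally, yielding a lower bound $\omega(\La)\geq f(\mu)$ for an explicit increasing function $f$. Evaluating $f$ at the Dirichlet boundary $\mu=d-1$ should recover the left endpoint $3-d/(d-1)^2$, while $f(\mu)\to+\infty$ as $\mu\to+\infty$. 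By continuity of $f$ and since every $\mu\in[d-1,+\infty]$ is attainable by the existence theorem, every value in the target interval is realised (with $\omega^\ast=+\infty$ handled by a Liouville-type choice of $\vec\alpha$).

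\textbf{Main obstacle.} The hardest step is the matching upper bound $\omega(\La)\leq f(\mu)$, that is, ruling out sporadic lattice points that might beat the designed approximation via an unforeseen resonance between the units of $\cO_K$, the twist $\vec\gamma$, and the form $L$. Any such sequence would produce an approximation of $L$ violating its prescribed Diophantine exponent $\mu$, which is precisely why the existence theorem must deliver $\mu$ \emph{exactly} rather than merely as a lower bound. Turning this heuristic into a rigorous transference estimate — tracking how and when the norm bound $|N_{K/\Q}(\omega)|\geq1$ saturates, and isolating the interaction between the unit group and $\vec\gamma$ — is the hard part, and is where the constant $d/(d-1)^2$ in the boundary value ought ultimately to emerge.
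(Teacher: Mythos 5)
Your overall instinct --- combine the paper's existence theorem for linear forms with an algebraic norm-form lattice --- is in the right direction, but there is a genuine gap exactly where you flag the ``main obstacle'': the upper bound on $\omega(\La)$. Knowing that the linear form has Diophantine exponent \emph{exactly} $\mu$ is not sufficient, because the exponent only controls a $\limsup$ along a subsequence; to bound $\omega(\La)$ from above you need a \emph{pointwise} lower bound on $|L(\vec z)|$ at \emph{every} integer point. This is precisely what statement (ii) of Theorem \ref{t:dioph_type} supplies: for every $\vec z$ not proportional to a best approximation vector, $|L_{\pmb\alpha}(\vec z)|\cdot|\vec z|^{d-1}\gg|\vec z|^{-f_d(\beta)+(d-1)\beta}$, a rate separated by the factor $|\vec z|^{(d-1)\beta}$ from the rate along best approximations. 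The paper's deduction takes $d$ forms $L_1,\dots,L_d$ with $\inf_{\vec z\neq\vec 0}|L_1(\vec z)\cdots L_d(\vec z)|>0$, all evaluated at the \emph{same} $\vec z\in\Z^d$, replaces $L_d$ by $L_{\pmb\alpha}$, and writes
\[
|L_1(\vec z)\cdots L_{d-1}(\vec z)L_{\pmb\alpha}(\vec z)|
=|L_1(\vec z)\cdots L_d(\vec z)|\,
\frac{|L_{\pmb\alpha}(\vec z)|\cdot|\vec z|^{d-1}}{|L_d(\vec z)|\cdot|\vec z|^{d-1}}\,;
\]
the trivial bound $|L_d(\vec z)|\ll|\vec z|$ costs a factor $|\vec z|^{-d}$, and the separation $(d-1)\beta$ absorbs it exactly when $\beta\geq d/(d-1)$ --- this, not the Dirichlet exponent $\mu=d-1$ as you guess, is where the endpoint $3-d/(d-1)^2=f_d\big(\tfrac{d}{d-1}\big)/d$ comes from. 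The matching lower bound $\omega(\La)\geq f_d(\beta)/d$ needs statement (iii) (exactly two pairs of asymptotic directions) to guarantee $|L_i(\vec z)|\asymp|\vec z|$ along best approximation vectors; your proposal uses neither (ii) nor (iii), and the weaker input you posit (the exponent of $L$ being exactly $\mu$) cannot be upgraded to either.

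Your concrete lattice is also structurally problematic. Once $\vec x\in\Z^{d-1}$ is chosen to make $|L(\vec x)+y|$ small, the element $\omega=\sum_jx_j\omega_j$ is determined; you cannot then ``use the action of $\cO_K^\times$ to select a companion $\omega$'', since multiplying $\omega$ by a unit changes $\vec x$ and destroys the approximation $|L(\vec x)+y|\leq|\vec x|^{-\mu}$. The variables do not decouple the way your heuristic assumes. The paper avoids this entirely by keeping a single integer variable $\vec z\in\Z^d$, so that the smallness of $L_{\pmb\alpha}(\vec z)$ and the norm bound on $L_1(\vec z)\cdots L_d(\vec z)$ refer to the same point, and no optimisation over units is required.
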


\paragraph{Combined spectrum.}

We cannot avoid mentioning another natural question, which however we do not deal with in this paper. It concerns the structure of the \emph{combined spectrum}
\[\tilde\Omega_d=\Big\{\big(\omega(\La),\omega(\La^\ast)\big)\,\Big|\,\Lambda\in\cL_d \Big\},\]
where $\La^\ast$ denotes the dual lattice. It appears that $\tilde\Omega_d\neq[0,+\infty]\times[0,+\infty]$. It was shown in \cite{german_2017} that
\begin{equation} \label{eq:transference_for_lattice_exponents}
  \tilde\Omega_d\subset
  \left\{\big(x,y)\in[0,+\infty]^2\,\middle|\,
  \begin{array}{l}
    x\geq\dfrac{y}{(d-1)^2+d(d-2)y} \\
    y\geq\dfrac{x^{\vphantom{|}}}{(d-1)^2+d(d-2)x}
  \end{array} \right\}.
\end{equation}
Particularly, \eqref{eq:transference_for_lattice_exponents} implies that
\[\omega(\La)=0\iff\omega(\La^\ast)=0.\]

When proving Theorem \ref{t:rays} we do not control the dual lattice, so, the only nonzero pairs $\big(\omega(\La),\omega(\La^\ast)\big)$ currently known to the author are $(\omega,+\infty)$, where $\omega$ is of the form \eqref{eq:spectrum_from_schmidt}. Moreover, the corresponding examples described in \cite{german_2017} have a certain flaw, as in each of them the dual lattice has some nonzero points in the coordinate planes, so that the condition $\omega(\La^\ast)=+\infty$ is provided by a kind of degeneracy. It would be more interesting to construct lattices that are \emph{totally irrational}, i.e. such that neither the lattice, nor its dual contains nonzero points in the coordinate planes. In this context it is worth mentioning the paper \cite{technau_widmer} by N.\,Technau and M.\,Widmer which implies the existence of a totally irrational $\La$ such that $0\leq\omega(\La)\leq d$ and $\omega(\La^\ast)=+\infty$. Notice that in view of \eqref{eq:transference_for_lattice_exponents} the inequality $0\leq\omega(\La)\leq d$ for such $\La$ can be substituted by $\frac1{d(d-2)}\leq\omega(\La)\leq d$.

%

\paragraph{Structure of the paper.}

The rest of the paper is organized as follows. In Section \ref{sec:linear_forms} we remind some definitions and formulate Theorem \ref{t:dioph_type} concerning linear forms of a given Diophantine type. It is the main tool for proving Theorem \ref{t:rays} and, besides that, we deem it to be of independent interest. In Section \ref{sec:2dim} we deal with the two-dimensional case. In Section \ref{sec:deducing_main_theorem} we derive Theorem \ref{t:rays} from Theorem \ref{t:dioph_type}. The remaining Sections \ref{sec:key_parameters}--\ref{sec:explicit_construction} are devoted to establishing quantities responsible for the local order of approximation and proving Theorem \ref{t:dioph_type}.

\section{Linear forms and best approximation vectors} \label{sec:linear_forms}

Any lattice $\La$ of full rank in $\R^d$ admits a representation
\[\La=\Big\{\big(L_1(\vec z),\ldots,L_d(\vec z)\big)\,\Big|\,\vec z\in\Z^d \Big\},\]
where $L_1,\ldots,L_d$ are linearly independent linear forms in $\R^d$. It is rather difficult to control the values of all the $d$ forms at once. Controlling the values of each one of them separately is much simpler,
%
%
so, we obtain Theorem \ref{t:rays} as a corollary to Theorem \ref{t:dioph_type} below, which asserts the existence of linear forms of a given Diophantine type.

An analogue of Theorem \ref{t:dioph_type} in the simplest case of two variables (corresponding to $d=3$ in our notation) was proved in \cite{german_moshchevitin_bordeaux}. The argument we use in the current paper works in arbitrary dimension. Besides that, it allows not only controlling the rate of decay along the sequence of best approximation vectors, but also separating it slightly away from the rate of decay along all the other points.

Let us remind the definition of best approximation vectors. To this end let us set for each $\vec x=(x_1,\ldots,x_d)\in\R^d$
\[\underline{\vec x}=(x_1,\ldots,x_{d-1})\in\R^{d-1}\]
and let $\langle\,\cdot\,,\cdot\,\rangle$ denote the inner product. Let us also set
\[\pi_d=\Big\{\vec x=(x_1,\ldots,x_d)\in\R^d\,\Big|\,x_d=1 \Big\}.\]
We shall use this notation throughout the paper.

\begin{definition} \label{def:best_approx}
  Given $\pmb\alpha=(\alpha_1,\ldots,\alpha_{d-1},1)
  \in\pi_d$, consider the linear form
  \[L_{\pmb\alpha}(\vec x)=\langle\pmb\alpha,\vec x\rangle=\alpha_1x_1+\ldots+\alpha_{d-1}x_{d-1}+x_d.\]
  We say that a vector $\vec z\in\Z^d$ with nonzero $\underline{\vec z}$ is a \emph{best approximation vector} for $L_{\pmb\alpha}$ if
  \begin{equation} \label{eq:best_approx_def}
    |L_{\pmb\alpha}(\vec z)|\leq|L_{\pmb\alpha}(\vec z')|
  \end{equation}
  for each $\vec z'\in\Z^d$ such that $0<|\underline{\vec z'}|\leq|\underline{\vec z}|$ with a strict inequality in \eqref{eq:best_approx_def} if
  $0<|\underline{\vec z'}|<|\underline{\vec z}|$.
\end{definition}


It should be mentioned that Definition \ref{def:best_approx} differs a bit from the standard one. It is more common to consider the linear form $\langle\underline{\pmb\alpha},\underline{\vec x}\rangle$ and refer to $\underline{\vec z}$ as its best approximation vector if a similar condition holds for $\|\langle\underline{\pmb\alpha},\underline{\vec z}\rangle\|$, where $\|\cdot\|$ denotes the distance to the nearest integer. For our purposes it is more convenient to consider the complete vector $\vec z$.

If $\vec z$ is a best approximation vector for $L_{\pmb\alpha}$, then so is $-\vec z$. Taking a representative of each such a pair, we can order the set of the representatives so that
\[|\vec z_1|\leq|\vec z_2|\leq|\vec z_3|\leq\ldots.\]
This sequence is infinite if and only if there are no nonzero integer solutions to the equation $L_{\pmb\alpha}(\vec x)=0$.

\begin{definition} \label{def:asymp_dir}
  A unit vector $\vec u\in\R^d$ is called an \emph{asymptotic direction} for best approximation vectors for $L_{\pmb\alpha}$ if it is an accumulation point of the set
  \[\bigg\{\frac{\vec z}{|\vec z|}\,\bigg|\,\vec z\text{ is a best approximation vector for }L_{\pmb\alpha} \bigg\}.\]
\end{definition}

Clearly, the set of asymptotic directions is symmetric, so, same as best approximation vectors, asymptotic directions come in pairs. For a detailed study of this set we refer the reader to the paper \cite{german_2004}.

\begin{theorem} \label{t:dioph_type}
  Given $d\geq3$, $\beta>0$, set
  \[f_d(\beta)=(d-2)\beta^2+(2d-3)\beta.\]
  Then there is an $\pmb\alpha\in\pi_d$ such that

  \textup{(i)} for every $\vec z\in\Z^d$ which is a best approximation vector for $L_{\pmb\alpha}$ we have
  \[|L_{\pmb\alpha}(\vec z)|\cdot|\vec z|^{d-1}\asymp|\vec z|^{-f_d(\beta)};\]

  \textup{(ii)} for every $\vec z\in\Z^d$ which is not an integer multiple of a best approximation vector for $L_{\pmb\alpha}$ we have
  \[|L_{\pmb\alpha}(\vec z)|\cdot|\vec z|^{d-1}\gg|\vec z|^{-f_d(\beta)+(d-1)\beta};\]

  \textup{(iii)} the set of asymptotic directions for best approximation vectors for $L_{\pmb\alpha}$ contains exactly two (pairs of) points.

  Here the constants implied by ``\ $\asymp$'' and ``\ $\gg$'' are assumed to depend only on $d$, $\beta$, and $\pmb\alpha$.
\end{theorem}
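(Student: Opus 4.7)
}

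The plan is to construct $\pmb\alpha$ together with the full sequence of its best approximation vectors $\{\vec z_k\}_{k\geq1}$ simultaneously, by a Cantor-type inductive procedure. At step $k$ we pick the next integer vector $\vec z_{k+1}$ and shrink a nested family of convex neighbourhoods in $\pi_d$ in which the eventual $\pmb\alpha$ will lie; the limit point of this nested family is the desired $\pmb\alpha$, and the $\vec z_k$ we picked will automatically be its ordered best approximation vectors.

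The first step is geometric setup. For a candidate $\pmb\alpha\in\pi_d$, the requirement that a given integer vector $\vec z$ be a best approximation vector with $|L_{\pmb\alpha}(\vec z)|\asymp|\vec z|^{-(d-1)-f_d(\beta)}$ says that $\pmb\alpha$ lies in a thin slab in $\pi_d$ orthogonal (in the appropriate affine sense) to $\vec z$ and of prescribed thickness; the requirement of property (ii) for all integer vectors with $|\underline{\vec w}|<|\underline{\vec z_{k+1}}|$ but $\vec w\notin\Z\vec z_1\cup\cdots\cup\Z\vec z_k$ says that $\pmb\alpha$ avoids a bounded number of other, thicker slabs. I would begin by making these slabs explicit and computing the relative thicknesses that are dictated by the target exponents $f_d(\beta)$ and $f_d(\beta)-(d-1)\beta$.

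Next comes the inductive construction. To obtain exactly two pairs of asymptotic directions I fix a $2$-dimensional subspace $W\subset\R^d$ transverse to the $x_d=0$ hyperplane and two unit vectors $\vec u_1,\vec u_2\in W$, and build the sequence $\vec z_k$ so that the odd-indexed terms cluster along $\pm\vec u_1$ and the even-indexed terms along $\pm\vec u_2$, with
\[\log|\vec z_{k+1}|=(1+\beta)\log|\vec z_k|+O(1).\]
Inside the current slab for $\pmb\alpha$ there is freedom of the order of a ball of radius comparable to $|\vec z_k|^{-(d-1)-f_d(\beta)-(d-1)\beta}$, and this freedom is big enough (by a volume/Minkowski calculation) to admit at least one integer vector in the next prescribed direction and of the next prescribed size satisfying both the small-value condition and the asymptotic-direction condition; I pick such a vector, declare it $\vec z_{k+1}$, and replace the current slab by the thinner sub-slab forcing $L_{\pmb\alpha}(\vec z_{k+1})$ to have the exact prescribed magnitude. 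The fact that the growth ratio $|\vec z_{k+1}|/|\vec z_k|$ is at least some fixed power greater than $1$ guarantees the infinite sequence of accumulation points is just $\pm\vec u_1,\pm\vec u_2$.

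The hard part will be making the bookkeeping compatible: choosing the growth ratio $1+\beta$ and the slab thicknesses so that (a) the quadratic expression $f_d(\beta)=(d-2)\beta^2+(2d-3)\beta$ is exactly what comes out of solving the recurrence, (b) the slabs remain nested (otherwise the induction collapses), and (c) between $|\vec z_k|$ and $|\vec z_{k+1}|$ no unwanted integer vector can sneak in and violate either (i) or (ii). Step (a) amounts to balancing the $(d-1)$-dimensional Minkowski product $|L_{\pmb\alpha}(\vec z_k)|\cdot|\underline{\vec z_k}|^{d-1}$ against the geometric growth rate $(1+\beta)$ and yields a quadratic in $\beta$ whose coefficients depend on $d$; this is where $(d-2)$ and $(2d-3)$ should arise. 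Step (c) is the analogue, in our interleaved two-direction setting, of the classical fact that between two consecutive continued-fraction convergents there are no better rational approximations, and will be verified by bounding the $L_{\pmb\alpha}$-values of lattice points transverse to $W$ using the additional slack of $(d-1)\beta$ in the exponent of (ii). Properties (i) and (iii) then follow directly from the construction, and (ii) follows from the slab-avoidance conditions imposed at every stage.
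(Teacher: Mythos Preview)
Your overall architecture---build $(\vec z_k)$ and $\pmb\alpha$ together via nested regions in $\pi_d$, with growth $\log|\vec z_{k+1}|\approx(1+\beta)\log|\vec z_k|$---matches the paper's, and the quadratic $f_d(\beta)$ does emerge exactly as you predict in step (a). But two ingredients in your plan do not work as stated and are handled quite differently in the paper.

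First, the ``bounded number of slabs'' claim is false: the number of integer $\vec w$ with $|\underline{\vec w}|<|\underline{\vec z_{k+1}}|$ grows like $|\underline{\vec z_{k+1}}|^{d-1}$, and there is no a priori reason only boundedly many of the corresponding slabs meet your current region. The paper does not do slab-avoidance at all. Its region at stage $k$ is a \emph{ball} $\gS_k\subset\pi_d$ of radius $R_k=(2|\underline{\vec z_{k+1}}|\det\underline\La_{k,d-1})^{-1}$ centred at the unique $\pmb\alpha_k\in\pi_d$ orthogonal to $\vec z_k,\dots,\vec z_{k+d-2}$; this radius is tuned so that \emph{every} integer point outside the sublattice $\La_{k,d-1}=\spanned_\Z(\vec z_k,\dots,\vec z_{k+d-2})$ automatically satisfies $|L_{\pmb\alpha}(\vec z)|\geq(2\det\underline\La_{k,d-1})^{-1}$. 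Points inside $\La_{k,d-1}$ of norm $\leq|\underline{\vec z_{k+1}}|$ are then forced, by a determinant-growth condition built into the construction, into the rank-two sublattice $\La_{k,2}$, and a sign condition $\langle\pmb\alpha,\vec z_{k+1}\rangle\cdot\langle\pmb\alpha_{k+1},\vec z_k\rangle\geq0$ (selecting a half-ball of $\gS_{k+1}$) pins their $L_{\pmb\alpha}$-values from below. This rank-$(d{-}1)$ versus rank-$2$ sublattice dichotomy, not slab enumeration, is what makes (ii) go through; your step (c) as written is not a viable mechanism.

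Second, pre-fixing a plane $W$ and target directions $\vec u_1,\vec u_2$ is in tension with the lattice constraints. To get nested balls one needs each $d$ consecutive $\vec z_k$'s to be a \emph{basis} of $\Z^d$, which forces $\vec z_{k+d}\in-\vec z_k+\La_{k+1,d-1}$, and this affine constraint does not mesh easily with an externally imposed direction. The paper instead lets the directions emerge: it defines $\vec z_{k+d}$ explicitly via the Gram--Schmidt orthogonalisation of $\underline{\vec z_{k+d-1}},\dots,\underline{\vec z_{k+1}}$, arranging $\langle\underline{\vec z_{k+d-1}},\underline{\vec z_{k+d}}\rangle<0$ and $|\underline{\vec z_{k+d}}|\sim 3\sqrt2\,|\underline{\vec z_{k+d-1}}|^{1+\beta}$, and proves a posteriori that $\underline{\vec z_k}/|\underline{\vec z_k}|$ converges along even and along odd $k$ to two distinct limits.
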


\begin{remark*}
  As Prof.\,Moshchevitin noticed, a similar result should follow from Schmidt--Summerer's parametric geometry of numbers (see \cite{schmidt_summerer_2009}, \cite{schmidt_summerer_2013}, \cite{roy_annals_2015}) due to Roy's theorem (see \cite{roy_annals_2015}). The reason is that if $\vec z\in\Z^d$ is not an integer multiple of a best approximation vector for $L_{\pmb\alpha}$, there is a point $\vec z'\in\Z^d$ linearly independent with $\vec z$ such that
  \[\begin{cases}
      |L_{\pmb\alpha}(\vec z')|\leq|L_{\pmb\alpha}(\vec z)| \\
      |\underline{\vec z'}|\leq|\underline{\vec z}|
    \end{cases},\]
  in which case we have
  \[\lambda_2\bigg(\cC\Big(|\underline{\vec z}|\big/|L_{\pmb\alpha}(\vec z)|\Big)\bigg)\leq|\underline{\vec z}|,\]
  where
  \[\cC(Q)=\Big\{\vec x\in\R^d\,\Big|\,|\underline{\vec x}|\leq1,\ |L_{\pmb\alpha}(\vec x)|\leq Q^{-1} \Big\}\]
  and $\lambda_j\big(\cC(Q)\big)$ is the $j$-th successive minimum of $\cC(Q)$ w.r.t. $\Z^d$. Thus, any lower bound for $\lambda_2\big(\cC(Q)\big)$ gives a lower bound in the spirit of statement \textup{(ii)} of Theorem \ref{t:dioph_type}. However, Roy's theorem does not seem to immediately give any information concerning asymptotic directions for best approximation vectors, which we rely upon when deducing Theorem \ref{t:rays} from Theorem \ref{t:dioph_type} (see Section \ref{sec:deducing_main_theorem}).
\end{remark*}

\section{Case d=2} \label{sec:2dim}

The argument we use when proving Theorem \ref{t:dioph_type} essentially involves the assumption $d\geq3$. We should notice however that in the two-dimensional case everything is much simpler, as in this case we have such a powerful tool as regular continued fractions.

By Legendre's theorem every reduced rational $p/q$ that is not a convergent of $\alpha$ satisfies
\[\bigg|\alpha-\frac pq\bigg|\geq\frac1{2q^2}\,.\]
At the same time every convergent satisfies
\[\bigg|\alpha-\frac pq\bigg|\leq\frac1{q^2}\,.\]
Due to the classical relation
\[\mu(\alpha)-2=\limsup_{n\to+\infty}\frac{\ln a_{n+1}}{\ln q_n}\]
between the measure of irrationality
\[\mu(\alpha)=\sup\Big\{\gamma\in\R\ \Big|\,\big|\alpha-p/q\big|\leq|q|^{-\gamma}\text{ admits $\infty$ solutions in }(q,p)\in\Z^2 \Big\}\]
and the rate of growth of partial quotients, one can construct an $\alpha$ with any given $\mu(\alpha)\geq2$. So, taking into account the Legendre theorem mentioned above we get

\begin{theorem} \label{t:dioph_type_2dim}
  Given $\beta\geq0$, there is an $\pmb\alpha\in\pi_2$ such that

  \textup{(i)} for every $\vec z\in\Z^2$ which is a best approximation vector for $L_{\pmb\alpha}$ we have
  \[|L_{\pmb\alpha}(\vec z)|\cdot|\vec z|\asymp|\vec z|^{-\beta};\]

  \textup{(ii)} for every $\vec z\in\Z^2$ which is not an integer multiple of a best approximation vector for $L_{\pmb\alpha}$ we have
  \[|L_{\pmb\alpha}(\vec z)|\cdot|\vec z|\gg1.\]
\end{theorem}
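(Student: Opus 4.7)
The plan is to reduce everything to regular continued fractions. Writing $\pmb\alpha=(\alpha,1)$ with $\alpha\in\R\setminus\Q$, we have $L_{\pmb\alpha}(z_1,z_2)=\alpha z_1+z_2$, and for any fixed $z_1=q>0$ the minimum of $|\alpha q+z_2|$ over $z_2\in\Z$ equals $\|\alpha q\|$. Consequently $\vec z\in\Z^2$ with $z_1>0$ is a best approximation vector for $L_{\pmb\alpha}$ precisely when $\vec z=(q_n,-p_n)$ for some convergent $p_n/q_n$ of $\alpha$; in this case $|\vec z|\asymp q_n$.

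I would then construct $\alpha$ by prescribing its partial quotients. Set $a_0=0$, $a_1=1$, and recursively define
\[a_{n+1}:=\max\bigl\{1,\lfloor q_n^\beta\rfloor\bigr\}\qquad(n\geq 1),\]
where $q_n$ is the denominator of $[a_0;a_1,\ldots,a_n]$. From $q_{n+1}=a_{n+1}q_n+q_{n-1}$ an easy induction shows that $q_n\to\infty$ and that the asymptotics $a_{n+1}\asymp q_n^\beta$ and $q_{n+1}\asymp q_n^{1+\beta}$ hold with constants depending only on $\beta$; the degenerate case $\beta=0$ reduces to choosing any $\alpha$ with bounded partial quotients.

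For part \textup{(i)}, I would combine the classical two-sided bound
\[\frac{1}{q_{n+1}+q_n}<|q_n\alpha-p_n|<\frac{1}{q_{n+1}}\]
with $q_{n+1}\asymp a_{n+1}q_n$ to obtain
\[|L_{\pmb\alpha}(\vec z_n)|\cdot|\vec z_n|\asymp\frac{q_n}{q_{n+1}}\asymp\frac{1}{a_{n+1}}\asymp q_n^{-\beta}\asymp|\vec z_n|^{-\beta}.\]
For part \textup{(ii)}, let $\vec z=(q,-p)\in\Z^2$ with $q>0$ fail to be an integer multiple of a best approximation vector (the case $z_1=0$ is trivial since then $|L_{\pmb\alpha}(\vec z)|\cdot|\vec z|=z_2^2\geq 1$). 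Writing $\vec z=d(q',-p')$ with $d\geq 1$, $q'>0$, $\gcd(q',p')=1$, the pair $(q',-p')$ cannot itself be a best approximation vector, hence $p'/q'$ is not a convergent of $\alpha$; Legendre's theorem, quoted at the beginning of the section, then gives $|q'\alpha-p'|\geq 1/(2q')$, whence
\[|L_{\pmb\alpha}(\vec z)|\cdot|\vec z|\geq d|q'\alpha-p'|\cdot dq'\geq d^2/2\geq 1/2.\]

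The argument presents no real obstacle and is essentially a formalization of the paragraph immediately preceding the theorem. The only point demanding a little care is the uniformity of the constants in $a_{n+1}\asymp q_n^\beta$, which is immediate from the explicit formula $a_{n+1}=\max\{1,\lfloor q_n^\beta\rfloor\}$ once one knows $q_n\to\infty$.
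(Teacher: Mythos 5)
Your proof is correct and follows essentially the same route as the paper, which only sketches the argument (Legendre's theorem for non-convergents plus control of the partial quotients); your explicit choice $a_{n+1}=\max\{1,\lfloor q_n^\beta\rfloor\}$ is a clean way to get the two-sided bound in (i) for \emph{every} convergent rather than just along a $\limsup$. The only micro-caveat is the classical exception that $p_0/q_0$ need not be a best approximation when $a_1=1$, so ``not a best approximation vector $\Rightarrow$ not a convergent'' should exclude that one point, which is harmless since the implied constants may depend on $\pmb\alpha$.
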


Of course, it is senseless in this case to talk about asymptotic directions for best approximation vectors, there is always one obvious pair of them.

Theorem \ref{t:dioph_type_2dim} allows describing completely the spectrum of lattice exponents in the two-dimensional case. Given $\beta\geq0$, consider a pair of distinct linear forms $L_1$, $L_2$ provided by Theorem \ref{t:dioph_type_2dim} and the corresponding lattice
\[\La=\Big\{\big(L_1(\vec z),L_2(\vec z)\big)\,\Big|\,\vec z\in\Z^2 \Big\}.\]
Then $L_1(\vec z)\asymp|\vec z|$ if $|L_2(\vec z)|\leq1$, and $L_2(\vec z)\asymp|\vec z|$ if $|L_1(\vec z)|\leq1$, so, statement \textup{(i)} of Theorem \ref{t:dioph_type_2dim} already guarantees that $\omega(\La)=\beta$, regardless of statement \textup{(ii)}.

Thus, in the two-dimensional case statement \textup{(i)} alone trivially implies
\[\Omega_2=[0,+\infty].\]
For $d\geq3$ we have to make use of a multidimensional generalization of statement \textup{(ii)} provided by Theorem \ref{t:dioph_type}.

\section{Deducing Theorem \ref{t:rays} from Theorem \ref{t:dioph_type}} \label{sec:deducing_main_theorem}

Take an arbitrary $\beta\geq\frac{d}{d-1}$ and a linear form $L_{\pmb\alpha}$ provided for this $\beta$ by Theorem \ref{t:dioph_type}. Suppose $\pm\pmb\zeta_1$ and $\pm\pmb\zeta_2$ are the asymptotic directions for best approximation vectors for $L_{\pmb\alpha}$. Consider arbitrary linear forms $L_1,\ldots,L_d$ such that none of them is zero at $\pmb\zeta_1$, $\pmb\zeta_2$ and
\begin{equation} \label{eq:positive_norm_minimum}
  \inf_{\vec z\in\Z^d\backslash\{\vec0\}}|L_1(\vec z)\ldots L_d(\vec z)|>0.
\end{equation}
Then, if $\vec z\in\Z^d$ is a best approximation vector for $L_{\pmb\alpha}$, we have
\[|L_i(\vec z)|\asymp|\vec z|,\quad i=1,\ldots,d,\]
so,
\[|L_1(\vec z)\ldots L_{d-1}(\vec z)L_{\pmb\alpha}(\vec z)|\asymp|\vec z|^{-f_d(\beta)},\]
whereas for every $\vec z\in\Z^d\backslash\{\vec0\}$ which is not an integer multiple of a best approximation vector for $L_{\pmb\alpha}$ we have
\begin{multline*}
  |L_1(\vec z)\ldots L_{d-1}(\vec z)L_{\pmb\alpha}(\vec z)|=
  |L_1(\vec z)\ldots L_d(\vec z)|\frac{|L_{\pmb\alpha}(\vec z)|\cdot|\vec z|^{d-1}}{|L_d(\vec z)|\cdot|\vec z|^{d-1}}\gg \\ \gg
  |\vec z|^{-f_d(\beta)+(d-1)\beta-d}\geq|\vec z|^{-f_d(\beta)},
\end{multline*}
since $|L_d(\vec z)|\ll|\vec z|$ and $\beta\geq\frac{d}{d-1}$.

Thus, for
\[\La=\Big\{\big(L_1(\vec z),\ldots,L_{d-1}(\vec z),L_{\pmb\alpha}(\vec z)\big)\,\Big|\,\vec z\in\Z^d \Big\}\]
we have
\[\omega(\La)=\frac{f_d(\beta)}{d}\,.\]
Noticing that $f_d$ monotonously maps $[0,+\infty]$ onto $[0,+\infty]$ and
\[\frac{f_d\big(\frac{d}{d-1}\big)}{d}=3-\frac{d}{(d-1)^2}\]
we get the statement of Theorem \ref{t:rays}.

\begin{remark*}
  Instead of forms satisfying \eqref{eq:positive_norm_minimum} we could have taken arbitrary linear forms generating a lattice with exponent not exceeding $\frac{d-1}d\beta-1$.
\end{remark*}

\section{Key parameters} \label{sec:key_parameters}

Given a sequence $(\vec z_k)$ of points in $\Z^d$, we set for each $k$ such that $\underline{\vec z_k}\wedge\ldots\wedge\underline{\vec z_{k+d-2}}$ is nonzero
\[\begin{aligned}
    & \La_{k,l}=\spanned_{\Z}(\vec z_k,\ldots,\vec z_{k+l-1}),
      \hskip46,6mm
      l=1,\ldots,d, \\
    & \underline{\La}_{k,l}=\spanned_{\Z}(\underline{\vec z_k},\ldots,\underline{\vec z_{k+l-1}}),\ \
      D_{k,l}=\frac{\det\underline{\La}_{k,l}}{|\underline{\vec z_k}|^l}\,,
      \hskip15mm
      l=1,\ldots,d-1, \\
    & R_k=\frac1{2|\underline{\vec z_{k+1}}|\det\underline{\La}_{k,d-1}}\,,
      \hskip10.3mm
      B_k=\frac{|\underline{\vec z_{k+1}}|}{|\underline{\vec z_k}|}\,.
  \end{aligned}\]
Set also $\pmb\alpha_k$ to be the point in $\pi_d$ defined by
\[\pmb\alpha_k\perp\vec z_k,\ldots,\vec z_{k+d-2},\]
and set
\[\gS_k=\Big\{\vec x\in\pi_d\,\Big|\,|\vec x-\pmb\alpha_k|\leq R_k \Big\}.\]

We shall prove Theorem \ref{t:dioph_type} by constructing a sequence $(\vec z_k)$ which will be the sequence of best approximation vectors for a linear form. The quantities $D_{k,l}$ and $B_k$ are responsible for the local order of approximation. Knowing this, it is possible to choose parameters properly, so that the rates of decay along $(\vec z_k)$ and along sequences not intersecting $(\vec z_k)$ differ in a desired way. Lemmas \ref{l:bound_for_the_outside_of_the_lattice}, \ref{l:bound_for_z1}, \ref{l:bound_for_the_lattice} we prove in the next Section give the aforementioned connection between $D_{k,l}$, $B_k$ and the local order of approximation. We notice they hold without the assumption that $\vec z_k$ are best approximation vectors.

\section{Local order of approximation}


\begin{lemma} \label{l:bound_for_the_outside_of_the_lattice}
  Suppose $\vec z_1,\ldots,\vec z_{d-1}\in\Z^d$ are linearly independent, $|\underline{\vec z_1}|\leq|\underline{\vec z_2}|$, $\pmb\alpha\in\gS_1$, $\vec z\in\Z^d\backslash\La_{1,d-1}$.

  \textup{(i)}
  If $|\underline{\vec z}|\leq|\underline{\vec z_2}|$, then $|L_{\pmb\alpha}(\vec z)|\geq|L_{\pmb\alpha}(\vec z_1)|$.

  \textup{(ii)}
  If $|\underline{\vec z_1}|\leq|\underline{\vec z}|\leq|\underline{\vec z_2}|$, then
  $|L_{\pmb\alpha}(\vec z)|\cdot|\underline{\vec z}|^{d-1}\geq\dfrac1{2D_{1,d-1}}\,$.
\end{lemma}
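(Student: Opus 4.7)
The plan is to compare $L_{\pmb\alpha}$ with its ``centered'' counterpart $L_{\pmb\alpha_1}$, exploiting the fact that the latter vanishes on $\vec z_1,\ldots,\vec z_{d-1}$ and has an integrality gap at every other integer point; the size of $R_1$ is tuned exactly so that the perturbation $\pmb\alpha-\pmb\alpha_1$ cannot destroy this gap.

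First I would write $\pmb\alpha_1$ explicitly. Since $\pmb\alpha_1\in\pi_d$ is orthogonal to $\vec z_1,\ldots,\vec z_{d-1}$, expansion of a determinant along the last coordinate identifies $\pmb\alpha_1$ with the normalised generalised cross product of these vectors, giving
\[
L_{\pmb\alpha_1}(\vec z)=\pm\frac{\det(\vec z_1,\ldots,\vec z_{d-1},\vec z)}{\det\underline{\La}_{1,d-1}}\qquad\text{for every }\vec z\in\R^d,
\]
with the sign determined by the condition that the last coordinate of $\pmb\alpha_1$ is $1$. For $\vec z\in\Z^d\setminus\La_{1,d-1}$ the numerator is a nonzero integer, whence
\[
|L_{\pmb\alpha_1}(\vec z)|\geq\frac{1}{\det\underline{\La}_{1,d-1}}.
\]

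Next I would pass from $\pmb\alpha_1$ to an arbitrary $\pmb\alpha\in\gS_1$. Both vectors lie in $\pi_d$, so $\pmb\alpha-\pmb\alpha_1$ has vanishing last coordinate and Euclidean norm at most $R_1$; hence
\[
L_{\pmb\alpha}(\vec z)=L_{\pmb\alpha_1}(\vec z)+\langle\pmb\alpha-\pmb\alpha_1,\vec z\rangle,\qquad|\langle\pmb\alpha-\pmb\alpha_1,\vec z\rangle|\leq R_1|\underline{\vec z}|.
\]
Under the hypothesis $|\underline{\vec z}|\leq|\underline{\vec z_2}|$, the definition of $R_1$ yields $R_1|\underline{\vec z}|\leq 1/(2\det\underline{\La}_{1,d-1})$, and the reverse triangle inequality produces the key estimate
\[
|L_{\pmb\alpha}(\vec z)|\geq\frac{1}{2\det\underline{\La}_{1,d-1}}.
\]
Both statements now fall out. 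For (i), apply the same perturbation bound to $\vec z_1$: because $L_{\pmb\alpha_1}(\vec z_1)=0$, we get $|L_{\pmb\alpha}(\vec z_1)|\leq R_1|\underline{\vec z_1}|\leq R_1|\underline{\vec z_2}|=1/(2\det\underline{\La}_{1,d-1})$, which is dominated by the lower bound just obtained for $|L_{\pmb\alpha}(\vec z)|$. For (ii), multiply the key estimate by $|\underline{\vec z}|^{d-1}$, invoke $|\underline{\vec z}|\geq|\underline{\vec z_1}|$, and recall that $D_{1,d-1}=\det\underline{\La}_{1,d-1}/|\underline{\vec z_1}|^{d-1}$.

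The delicate point is the integrality claim: $\det(\vec z_1,\ldots,\vec z_{d-1},\vec z)$ is nonzero precisely when $\vec z$ lies outside $\spanned_{\R}(\vec z_1,\ldots,\vec z_{d-1})$, which is formally stronger than $\vec z\notin\La_{1,d-1}$. I expect this to be the only real obstacle; in the intended application the $\vec z_k$ are chosen to be genuine best approximation vectors, so nothing integral sits inside that real span outside $\La_{1,d-1}$, but in full generality one should either read $\La_{1,d-1}$ as $\spanned_{\R}(\vec z_1,\ldots,\vec z_{d-1})\cap\Z^d$ or dispose of the degenerate coplanar case by a separate short argument.
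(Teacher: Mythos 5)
Your proof is correct and follows essentially the same route as the paper: the paper likewise uses that $\pmb\alpha_1\cdot\det\underline{\La}_{1,d-1}\in\Z^d$ to get $|\langle\pmb\alpha_1,\vec z\rangle|\geq(\det\underline{\La}_{1,d-1})^{-1}$ off $\La_{1,d-1}$, and then bounds the perturbation by $R_1|\underline{\vec z}|\leq R_1|\underline{\vec z_2}|=1/(2\det\underline{\La}_{1,d-1})$ exactly as you do. The caveat you raise about integer points lying in $\spanned_\R(\vec z_1,\ldots,\vec z_{d-1})\setminus\La_{1,d-1}$ is equally present (and left implicit) in the paper's argument, and is harmless where the lemma is applied, since there $\vec z_1,\ldots,\vec z_{d-1}$ form part of a basis of $\Z^d$.
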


\begin{proof}
  If $\pmb\alpha\in\gS_1$, then
  \[|L_{\pmb\alpha}(\vec z_1)|=
    |\langle\pmb\alpha,\vec z_1\rangle|=
    |\langle\pmb\alpha-\pmb\alpha_1,\vec z_1\rangle|=
    |\langle\underline{\pmb\alpha}-\underline{\pmb\alpha_1},\underline{\vec z_1}\rangle|\leq
    R_1|\underline{\vec z_2}|=
    \frac1{2\det\underline{\La}_{1,d-1}}.\]
  Furthermore, it follows from the definition of $\pmb\alpha_1$ that $\pmb\alpha_1\cdot\det\underline{\La}_{1,d-1}\in\Z^d$. Hence for each $\vec z\in\Z^d\backslash\La_{1,d-1}$
  \[|\langle\pmb\alpha_1,\vec z\rangle|\geq\frac1{\det\underline{\La}_{1,d-1}}\,,\]
  so, if $\pmb\alpha\in\gS_1$ and $\vec z\in\Z^d\backslash\La_{1,d-1}$, $|\underline{\vec z}|\leq|\underline{\vec z_2}|$, then
  \begin{multline*}
    |L_{\pmb\alpha}(\vec z)|=
    |\langle\pmb\alpha,\vec z\rangle|\geq
    |\langle\pmb\alpha_1,\vec z\rangle|-|\langle\pmb\alpha-\pmb\alpha_1,\vec z\rangle|=
    |\langle\pmb\alpha_1,\vec z\rangle|-|\langle\underline{\pmb\alpha}-\underline{\pmb\alpha_1},\underline{\vec z}\rangle|\geq \\ \geq
    \frac1{\det\underline{\La}_{1,d-1}}-R_1|\underline{\vec z_2}|=
    \frac1{2\det\underline{\La}_{1,d-1}}.
  \end{multline*}
  Thus,
  \[|L_{\pmb\alpha}(\vec z)|\geq
    |L_{\pmb\alpha}(\vec z_1)|,\]
  and if additionally $|\underline{\vec z}|\geq|\underline{\vec z_1}|$, then
  \[|L_{\pmb\alpha}(\vec z)|\cdot|\underline{\vec z}|^{d-1}\geq
    \frac{|\underline{\vec z_1}|^{d-1}}{2\det\underline{\La}_{1,d-1}}=
    \frac1{2D_{1,d-1}}\,.\]
\end{proof}

\begin{lemma} \label{l:bound_for_z1}
  Let $\vec z_1,\ldots,\vec z_d\in\Z^d$ be a basis of $\Z^d$.
  Then for each $\pmb\alpha\in\gS_2$ we have
  \[\frac1{D_{2,d-1}B_1^{d-1}}\bigg(1-\frac1{2B_1B_2}\bigg)\leq
    |L_{\pmb\alpha}(\vec z_1)|\cdot|\underline{\vec z_1}|^{d-1}\leq
    \frac1{D_{2,d-1}B_1^{d-1}}\bigg(1+\frac1{2B_1B_2}\bigg)\,.\]
\end{lemma}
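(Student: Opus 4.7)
The proof should mirror the structure of Lemma \ref{l:bound_for_the_outside_of_the_lattice}: write $L_{\pmb\alpha}(\vec z_1) = \langle \pmb\alpha_2, \vec z_1\rangle + \langle \pmb\alpha - \pmb\alpha_2, \vec z_1\rangle$, pin down the ``main term'' $|\langle \pmb\alpha_2, \vec z_1\rangle|$ exactly, and control the ``error term'' via $\pmb\alpha \in \gS_2$. The key new input, compared to Lemma \ref{l:bound_for_the_outside_of_the_lattice}, is that we now get an \emph{equality} for the integer inner product, rather than merely a lower bound, because $\vec z_1, \ldots, \vec z_d$ is a $\Z$-basis of $\Z^d$.

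\textbf{Step 1 (exact value of the main term).} The vector obtained from the $(d-1)\times(d-1)$ minors of the matrix with rows $\vec z_2,\ldots,\vec z_d$ is an integer vector $\vec w$ orthogonal to $\vec z_2,\ldots,\vec z_d$, with last coordinate equal to $\pm \det \underline{\La}_{2,d-1}$. Moreover, since $\vec z_1,\ldots,\vec z_d$ is a $\Z$-basis of $\Z^d$, one has $\langle \vec w, \vec z_1\rangle = \pm\det(\vec z_1,\ldots,\vec z_d) = \pm 1$. Normalising $\vec w$ to lie in $\pi_d$ gives $\pmb\alpha_2 = \vec w/(\pm\det \underline{\La}_{2,d-1})$, and therefore
\[
  |\langle \pmb\alpha_2, \vec z_1\rangle| = \frac{1}{\det \underline{\La}_{2,d-1}}.
\]

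\textbf{Step 2 (controlling the error term).} Since $\pmb\alpha$ and $\pmb\alpha_2$ both belong to $\pi_d$, the difference $\pmb\alpha - \pmb\alpha_2$ has vanishing last coordinate, so $\langle \pmb\alpha - \pmb\alpha_2, \vec z_1\rangle = \langle \underline{\pmb\alpha} - \underline{\pmb\alpha_2}, \underline{\vec z_1}\rangle$. Cauchy--Schwarz combined with $\pmb\alpha \in \gS_2$ gives
\[
  |\langle \pmb\alpha - \pmb\alpha_2, \vec z_1\rangle|
  \leq R_2 \,|\underline{\vec z_1}|
  = \frac{|\underline{\vec z_1}|}{2\,|\underline{\vec z_3}|\,\det \underline{\La}_{2,d-1}}.
\]

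\textbf{Step 3 (assembly).} The triangle inequality then yields
\[
  \frac{1}{\det \underline{\La}_{2,d-1}}\!\left(1-\frac{|\underline{\vec z_1}|}{2|\underline{\vec z_3}|}\right)
  \leq |L_{\pmb\alpha}(\vec z_1)|
  \leq \frac{1}{\det \underline{\La}_{2,d-1}}\!\left(1+\frac{|\underline{\vec z_1}|}{2|\underline{\vec z_3}|}\right).
\]
Multiplying by $|\underline{\vec z_1}|^{d-1}$ and using the definitions $D_{2,d-1} = \det \underline{\La}_{2,d-1}/|\underline{\vec z_2}|^{d-1}$, $B_1 = |\underline{\vec z_2}|/|\underline{\vec z_1}|$, $B_2 = |\underline{\vec z_3}|/|\underline{\vec z_2}|$ turns
\[
  \frac{|\underline{\vec z_1}|^{d-1}}{\det \underline{\La}_{2,d-1}} = \frac{1}{D_{2,d-1}B_1^{d-1}},
  \qquad
  \frac{|\underline{\vec z_1}|}{|\underline{\vec z_3}|} = \frac{1}{B_1 B_2},
\]
which gives exactly the claimed double inequality.

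\textbf{Main obstacle.} The only nontrivial point is Step~1: identifying $|\langle \pmb\alpha_2,\vec z_1\rangle|$ as $1/\det \underline{\La}_{2,d-1}$ and not merely bounding it from below (the latter is what is done in Lemma \ref{l:bound_for_the_outside_of_the_lattice} and would only yield the lower bound of the lemma). Crucially, the equality relies on the basis hypothesis $\det(\vec z_1,\ldots,\vec z_d)=\pm 1$; without it one would pick up an extra factor equal to the index $[\Z^d:\La_{1,d}]$ in the numerator of the main term.
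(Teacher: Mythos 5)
Your proposal is correct and follows essentially the same route as the paper: decompose $L_{\pmb\alpha}(\vec z_1)$ into the main term $\langle\pmb\alpha_2,\vec z_1\rangle$ and the error $\langle\pmb\alpha-\pmb\alpha_2,\vec z_1\rangle$, evaluate the main term exactly as $\pm(\det\underline\La_{2,d-1})^{-1}$ using the basis hypothesis, and bound the error by $R_2|\underline{\vec z_1}|$. The only cosmetic difference is that you obtain the exact value of the main term via the explicit minor (cofactor) vector $\vec w$ with $\langle\vec w,\vec z_1\rangle=\pm\det(\vec z_1,\ldots,\vec z_d)=\pm1$, whereas the paper phrases the same fact through the primitivity of $\pmb\alpha_2\cdot\det\underline\La_{2,d-1}$.
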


\begin{proof}
  Since $\vec z_2,\ldots,\vec z_d$ form a primitive set of integer vectors, the vector $\pmb\alpha_2\cdot\det\underline{\La}_{2,d-1}$ is integer and primitive. Hence for each $\vec z\in\Z^d$ the value $\langle\pmb\alpha_2,\vec z\rangle$ is an integer multiple of $(\det\underline{\La}_{2,d-1})^{-1}$, and equals $\pm(\det\underline{\La}_{2,d-1})^{-1}$ for each $\vec z$ that completes $\vec z_2,\ldots,\vec z_d$ to a basis of $\Z^d$. Particularly,
  \[|\langle\pmb\alpha_2,\vec z_1\rangle|=\frac1{\det\underline{\La}_{2,d-1}}\,.\]
  Thus,
  \[|\langle\pmb\alpha_2,\vec z_1\rangle|\cdot|\underline{\vec z_1}|^{d-1}=\frac{|\underline{\vec z_1}|^{d-1}}{\det\underline{\La}_{2,d-1}}=\frac1{D_{2,d-1}B_1^{d-1}}\,.\]
  Furthermore, if $\pmb\alpha\in\gS_2$ then
  \begin{multline*}
    |\langle\pmb\alpha-\pmb\alpha_2,\vec z_1\rangle|\cdot|\underline{\vec z_1}|^{d-1}=
    |\underline{\langle\pmb\alpha}-\underline{\pmb\alpha_2},\underline{\vec z_1}\rangle|\cdot|\underline{\vec z_1}|^{d-1}\leq \\ \leq
    R_2|\underline{\vec z_1}|^d=
    \frac{|\underline{\vec z_1}|^d}{2|\underline{\vec z_3}|\det\underline{\La}_{2,d-1}}=\frac1{2D_{2,d-1}B_1^dB_2}\,.
  \end{multline*}
  It remains to make use of the inequality
  \[|\langle\pmb\alpha_2,\vec z_1\rangle|-|\langle\pmb\alpha-\pmb\alpha_2,\vec z_1\rangle|\leq
    |\langle\pmb\alpha,\vec z_1\rangle|\leq
    |\langle\pmb\alpha_2,\vec z_1\rangle|+|\langle\pmb\alpha-\pmb\alpha_2,\vec z_1\rangle|.\]
\end{proof}

\begin{corollary} \label{cor:bound_for_z1}
  Let $\vec z_1,\ldots,\vec z_d\in\Z^d$ be a basis of $\Z^d$,
  $|\underline{\vec z_1}|\leq|\underline{\vec z_2}|\leq|\underline{\vec z_3}|$. Then for each $\pmb\alpha\in\gS_2$ we have
  \[\langle\pmb\alpha,\vec z_1\rangle\cdot\langle\pmb\alpha_2,\vec z_1\rangle>0.\]
\end{corollary}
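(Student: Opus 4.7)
The plan is to show that the perturbation $\langle\pmb\alpha-\pmb\alpha_2,\vec z_1\rangle$ is too small to flip the sign of $\langle\pmb\alpha_2,\vec z_1\rangle$, so the two inner products have the same sign. Writing
\[
\langle\pmb\alpha,\vec z_1\rangle
=\langle\pmb\alpha_2,\vec z_1\rangle+\langle\pmb\alpha-\pmb\alpha_2,\vec z_1\rangle,
\]
the claim $\langle\pmb\alpha,\vec z_1\rangle\cdot\langle\pmb\alpha_2,\vec z_1\rangle>0$ is equivalent to the strict inequality $|\langle\pmb\alpha-\pmb\alpha_2,\vec z_1\rangle|<|\langle\pmb\alpha_2,\vec z_1\rangle|$.

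For the lower bound on $|\langle\pmb\alpha_2,\vec z_1\rangle|$ I will reuse the exact computation from the proof of Lemma \ref{l:bound_for_z1}: since $\vec z_1,\ldots,\vec z_d$ is a $\Z^d$-basis and $\pmb\alpha_2\cdot\det\underline{\La}_{2,d-1}\in\Z^d$ is primitive and annihilates $\vec z_2,\ldots,\vec z_d$, we have
\[
|\langle\pmb\alpha_2,\vec z_1\rangle|=\frac{1}{\det\underline{\La}_{2,d-1}}.
\]
For the upper bound on the perturbation I will use $\pmb\alpha\in\gS_2$ and the fact that $\underline{\pmb\alpha_2}-\underline{\pmb\alpha_2}$ acts in $\pi_d$, giving
\[
|\langle\pmb\alpha-\pmb\alpha_2,\vec z_1\rangle|
=|\langle\underline{\pmb\alpha}-\underline{\pmb\alpha_2},\underline{\vec z_1}\rangle|
\leq R_2\,|\underline{\vec z_1}|
=\frac{|\underline{\vec z_1}|}{2\,|\underline{\vec z_3}|\,\det\underline{\La}_{2,d-1}}.
\]

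The crucial step, and the only place the extra hypothesis of the corollary is used, is to invoke the assumption $|\underline{\vec z_1}|\leq|\underline{\vec z_3}|$ (equivalently $B_1B_2\geq1$), which gives
\[
|\langle\pmb\alpha-\pmb\alpha_2,\vec z_1\rangle|
\leq\frac{1}{2\,\det\underline{\La}_{2,d-1}}
=\tfrac12\,|\langle\pmb\alpha_2,\vec z_1\rangle|.
\]
This is strictly less than $|\langle\pmb\alpha_2,\vec z_1\rangle|$, so $\langle\pmb\alpha,\vec z_1\rangle$ inherits the sign of $\langle\pmb\alpha_2,\vec z_1\rangle$, proving the corollary.

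There is essentially no obstacle; the corollary is really a sharpened reading of the proof of Lemma \ref{l:bound_for_z1}. The only thing to verify carefully is that the comparison $|\underline{\vec z_1}|\leq|\underline{\vec z_3}|$ (not merely $|\underline{\vec z_1}|\leq|\underline{\vec z_2}|$) is what produces the decisive factor $\tfrac12$; without it one would only recover the weak bound $|L_{\pmb\alpha}(\vec z_1)|>0$ from Lemma \ref{l:bound_for_z1}, which is not enough to pin down the sign.
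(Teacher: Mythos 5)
Your proof is correct and follows essentially the same route as the paper: the paper simply cites the lower bound of Lemma \ref{l:bound_for_z1} (whose proof is exactly your decomposition $\langle\pmb\alpha,\vec z_1\rangle=\langle\pmb\alpha_2,\vec z_1\rangle+\langle\pmb\alpha-\pmb\alpha_2,\vec z_1\rangle$ with the perturbation bounded by $\tfrac12(\det\underline\La_{2,d-1})^{-1}$ via $R_2|\underline{\vec z_1}|$ and $|\underline{\vec z_1}|\leq|\underline{\vec z_3}|$) to get nonvanishing on $\gS_2$ and then reads off the constant sign. Your version merely makes explicit the sign-preservation step that the paper leaves implicit, so there is nothing to correct.
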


\begin{proof}
  We have $|\underline{\vec z_1}|\leq|\underline{\vec z_2}|\leq|\underline{\vec z_3}|$, i.e. $B_1\geq1$, $B_2\geq1$. Therefore, due to the lower bound provided by Lemma \ref{l:bound_for_z1} the quantity $\langle\pmb\alpha,\vec z_1\rangle$ cannot be equal to zero for $\pmb\alpha\in\gS_2$, so, $\langle\pmb\alpha,\vec z_1\rangle$ and $\langle\pmb\alpha_2,\vec z_1\rangle$ should be of the same sign.
\end{proof}

\begin{corollary} \label{cor:bound_for_z1_two_bases}
  Given $\vec z_1,\ldots,\vec z_{d+1}\in\Z^d$, suppose $\vec z_1,\ldots,\vec z_d$ and $\vec z_2,\ldots,\vec z_{d+1}$ are bases of $\Z^d$. Suppose also that
  $|\underline{\vec z_1}|\leq|\underline{\vec z_2}|\leq|\underline{\vec z_3}|\leq|\underline{\vec z_4}|$, $\pmb\alpha_3\in\gS_2$, and
  \begin{equation} \label{eq:alpha3_at_z1_and_z2}
    \langle\pmb\alpha_3,\vec z_2\rangle\cdot\langle\pmb\alpha_3,\vec z_1\rangle>0.
  \end{equation}
  Then for each $\pmb\alpha\in\gS_3$ we have
  \[\langle\pmb\alpha,\vec z_2\rangle\cdot\langle\pmb\alpha_2,\vec z_1\rangle>0.\]
\end{corollary}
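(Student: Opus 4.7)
The plan is to combine three sign-preservation facts, each of which comes from applying Corollary \ref{cor:bound_for_z1} to an appropriately chosen basis (or is assumed outright), producing a chain of four quantities all of the same nonzero sign:
\[\langle\pmb\alpha,\vec z_2\rangle,\quad \langle\pmb\alpha_3,\vec z_2\rangle,\quad \langle\pmb\alpha_3,\vec z_1\rangle,\quad \langle\pmb\alpha_2,\vec z_1\rangle.\]
The desired conclusion $\langle\pmb\alpha,\vec z_2\rangle\cdot\langle\pmb\alpha_2,\vec z_1\rangle>0$ follows at once by comparing the first and the last term in this chain.

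The first link is obtained by applying Corollary \ref{cor:bound_for_z1} to the basis $\vec z_2,\ldots,\vec z_{d+1}$, relabelled so that it becomes $\vec z_1',\ldots,\vec z_d'$ in the corollary's notation. Under this shift $\pmb\alpha_3$ plays the role of $\pmb\alpha_2'$ and $\gS_3$ plays the role of $\gS_2'$ (which is a direct comparison of the defining formulae for $\pmb\alpha_k$ and $R_k$ from Section \ref{sec:key_parameters}), so the corollary's hypothesis $\pmb\alpha\in\gS_2'$ is exactly the given $\pmb\alpha\in\gS_3$. The chain of sizes $|\underline{\vec z_1'}|\leq|\underline{\vec z_2'}|\leq|\underline{\vec z_3'}|$ becomes $|\underline{\vec z_2}|\leq|\underline{\vec z_3}|\leq|\underline{\vec z_4}|$, which is part of the hypothesis. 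This yields $\langle\pmb\alpha,\vec z_2\rangle\cdot\langle\pmb\alpha_3,\vec z_2\rangle>0$. The second link is just the assumption \eqref{eq:alpha3_at_z1_and_z2}. For the third link I apply Corollary \ref{cor:bound_for_z1} directly to the original basis $\vec z_1,\ldots,\vec z_d$ with the corollary's $\pmb\alpha$ taken to be $\pmb\alpha_3$: the requirement $\pmb\alpha_3\in\gS_2$ is given, and $|\underline{\vec z_1}|\leq|\underline{\vec z_2}|\leq|\underline{\vec z_3}|$ is in the hypothesis, giving $\langle\pmb\alpha_3,\vec z_1\rangle\cdot\langle\pmb\alpha_2,\vec z_1\rangle>0$.

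Multiplying the three strictly positive products, one obtains
\[\langle\pmb\alpha,\vec z_2\rangle\cdot\langle\pmb\alpha_2,\vec z_1\rangle\cdot\langle\pmb\alpha_3,\vec z_2\rangle^2\cdot\langle\pmb\alpha_3,\vec z_1\rangle^2>0,\]
and since the two squared factors are strictly positive, the conclusion follows. I do not expect any genuine obstacle; the content is essentially bookkeeping of which basis Corollary \ref{cor:bound_for_z1} is being invoked for and matching the associated shifted systems of parameters. The only point deserving real care is checking that the set $\gS_3$ attached to the original sequence coincides with the set $\gS_2$ attached to the shifted sequence $\vec z_2,\vec z_3,\ldots$, so that Corollary \ref{cor:bound_for_z1} is legitimately applicable in its first invocation.
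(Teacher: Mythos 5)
Your argument is correct and is essentially identical to the paper's own proof: the author likewise applies Corollary \ref{cor:bound_for_z1} once to the basis $\vec z_1,\ldots,\vec z_d$ (evaluated at $\pmb\alpha=\pmb\alpha_3\in\gS_2$) and once to the shifted basis $\vec z_2,\ldots,\vec z_{d+1}$, then chains the three sign relations through the hypothesis \eqref{eq:alpha3_at_z1_and_z2}. Your extra care about identifying the shifted system's $\gS_2$ with the original $\gS_3$ is exactly the bookkeeping the paper leaves implicit.
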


\begin{proof}
  By Corollary \ref{cor:bound_for_z1} we have
  \begin{equation} \label{eq:corollary_for_Omega2}
    \langle\pmb\alpha,\vec z_1\rangle\cdot\langle\pmb\alpha_2,\vec z_1\rangle>0
    \quad\text{ for each }\pmb\alpha\in\gS_2
  \end{equation}
  and
  \begin{equation} \label{eq:corollary_for_Omega3}
    \langle\pmb\alpha,\vec z_2\rangle\cdot\langle\pmb\alpha_3,\vec z_2\rangle>0
    \quad\text{ for each }\pmb\alpha\in\gS_3.
  \end{equation}
  Since $\pmb\alpha_3\in\gS_2$, it follows from \eqref{eq:corollary_for_Omega2} and \eqref{eq:alpha3_at_z1_and_z2} that
  \[\langle\pmb\alpha_3,\vec z_2\rangle\cdot\langle\pmb\alpha_2,\vec z_1\rangle>0.\]
  Combining it with \eqref{eq:corollary_for_Omega3} gives
  \[\langle\pmb\alpha,\vec z_2\rangle\cdot\langle\pmb\alpha_2,\vec z_1\rangle>0
    \quad\text{ for each }\pmb\alpha\in\gS_3.\]
\end{proof}

\begin{lemma} \label{l:bound_for_the_lattice}
  Let $\vec z_1,\ldots,\vec z_d\in\Z^d$ be a basis of $\Z^d$,
  $|\underline{\vec z_1}|\leq|\underline{\vec z_2}|\leq|\underline{\vec z_3}|$, $\langle\underline{\vec z_1},\underline{\vec z_2}\rangle\leq0$. If $d\geq4$, let
  \begin{equation} \label{eq:bound_for_the_lattice_dets}
    \det\underline\La_{1,n}>|\underline{\vec z_2}|\det\underline\La_{1,n-1},\qquad n=3,\ldots,d-1.
  \end{equation}
  Suppose $\pmb\alpha\in\gS_2$,
  \begin{equation} \label{eq:bound_for_the_lattice_semiball}
    \langle\pmb\alpha,\vec z_2\rangle\cdot\langle\pmb\alpha_2,\vec z_1\rangle\geq0.
  \end{equation}
  Suppose $\vec z\in\La_{1,d-1}$, $|\underline{\vec z}|\leq|\underline{\vec z_2}|$, $\vec z\neq\vec 0,\pm\vec z_2$. Then

  \textup{(i)}
  we have
  \[|L_{\pmb\alpha}(\vec z)|\geq
    |L_{\pmb\alpha}(\vec z_1)|;\]

  \textup{(ii)}
  if $\vec z$ is not a multiple of $\vec z_1$, then
  \[|L_{\pmb\alpha}(\vec z)|\cdot|\underline{\vec z}|^{d-1}\geq\frac{D_{1,2}^{d-1}}{D_{2,d-1}B_1^{d-1}}\bigg(1-\frac1{2B_1B_2}\bigg).\]
\end{lemma}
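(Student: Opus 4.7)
The plan is to combine three observations: (a) the height condition \eqref{eq:bound_for_the_lattice_dets} forces $\vec z$ to lie inside the 2-dimensional sublattice $\La_{1,2}$, so $\vec z=c_1\vec z_1+c_2\vec z_2$ with $c_1,c_2\in\Z$; (b) the acute-angle hypothesis $\langle\underline{\vec z_1},\underline{\vec z_2}\rangle\leq 0$ rules out $c_1c_2<0$; (c) the sign hypothesis \eqref{eq:bound_for_the_lattice_semiball} combined with Corollary \ref{cor:bound_for_z1} makes $L_{\pmb\alpha}(\vec z_1)$ and $L_{\pmb\alpha}(\vec z_2)$ share a sign. Together (b) and (c) prevent any cancellation in $L_{\pmb\alpha}(\vec z)=c_1L_{\pmb\alpha}(\vec z_1)+c_2L_{\pmb\alpha}(\vec z_2)$, so $|L_{\pmb\alpha}(\vec z)|=|c_1|\cdot|L_{\pmb\alpha}(\vec z_1)|+|c_2|\cdot|L_{\pmb\alpha}(\vec z_2)|$, after which Lemma \ref{l:bound_for_z1} delivers both bounds.

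First I would carry out the reduction in (a). Since $\vec z_1,\ldots,\vec z_d$ is a $\Z$-basis of $\Z^d$, the restriction $\vec z_1,\ldots,\vec z_{d-1}$ is a $\Z$-basis of $\La_{1,d-1}$ and $\vec z=\sum_{j=1}^{d-1}c_j\vec z_j$. Let $m=\max\{j:c_j\neq 0\}$. If $m\geq 3$, project $\underline{\vec z}$ onto the orthogonal complement of $\spanned(\underline{\vec z_1},\ldots,\underline{\vec z_{m-1}})$ in $\R^{d-1}$: only the $c_m$-term survives, and its length equals $|c_m|\cdot\det\underline\La_{1,m}/\det\underline\La_{1,m-1}>|\underline{\vec z_2}|$ by \eqref{eq:bound_for_the_lattice_dets}, contradicting $|\underline{\vec z}|\leq|\underline{\vec z_2}|$. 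Hence $c_3=\ldots=c_{d-1}=0$ and $\vec z=c_1\vec z_1+c_2\vec z_2$ (the case $d=3$ being automatic).

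Next I would exclude $c_1c_2<0$: the Gram identity
\[
|\underline{\vec z}|^2=c_1^2|\underline{\vec z_1}|^2+2c_1c_2\langle\underline{\vec z_1},\underline{\vec z_2}\rangle+c_2^2|\underline{\vec z_2}|^2
\]
together with $\langle\underline{\vec z_1},\underline{\vec z_2}\rangle\leq 0$ gives, if $c_1c_2<0$, that $|\underline{\vec z}|^2\geq|\underline{\vec z_1}|^2+|\underline{\vec z_2}|^2>|\underline{\vec z_2}|^2$, contradicting our bound. Hence $c_1c_2\geq 0$. For the sign alignment, Corollary \ref{cor:bound_for_z1} gives that $\langle\pmb\alpha,\vec z_1\rangle$ has the same sign $s=\mathrm{sign}(\langle\pmb\alpha_2,\vec z_1\rangle)$, and \eqref{eq:bound_for_the_lattice_semiball} says $\langle\pmb\alpha,\vec z_2\rangle$ has sign $s$ or is zero. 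Combined with $c_1c_2\geq 0$ this yields $|L_{\pmb\alpha}(\vec z)|=|c_1|\cdot|L_{\pmb\alpha}(\vec z_1)|+|c_2|\cdot|L_{\pmb\alpha}(\vec z_2)|$ with no cancellation.

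Part (i) then follows immediately once we observe that $\vec z\neq 0,\pm\vec z_2$ together with $|\underline{\vec z}|\leq|\underline{\vec z_2}|$ forces $c_1\neq 0$ (if $c_1=0$ and $c_2\neq 0$, then $|c_2|=1$ and $\vec z=\pm\vec z_2$). For part (ii), the hypothesis that $\vec z$ is not a multiple of $\vec z_1$ gives $c_2\neq 0$; projecting $\underline{\vec z}$ onto $\underline{\vec z_1}^{\perp}$ then yields $|\underline{\vec z}|\geq|c_2|\cdot\det\underline\La_{1,2}/|\underline{\vec z_1}|\geq\det\underline\La_{1,2}/|\underline{\vec z_1}|$, and Lemma \ref{l:bound_for_z1} provides $|L_{\pmb\alpha}(\vec z_1)|\geq(\det\underline\La_{2,d-1})^{-1}(1-1/(2B_1B_2))$. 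Multiplying and converting back to the $D_{k,l}$, $B_k$ notation gives exactly the right-hand side of (ii). The main obstacle I expect is the reduction step: recognising that hypothesis \eqref{eq:bound_for_the_lattice_dets} is precisely the "tallness" needed to confine every short $\vec z\in\La_{1,d-1}$ to $\La_{1,2}$ is the single nontrivial idea; once this is in hand, the remaining sign bookkeeping is routine.
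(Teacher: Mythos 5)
Your proof is correct and follows essentially the same route as the paper: reduce to $\vec z=c_1\vec z_1+c_2\vec z_2$ via \eqref{eq:bound_for_the_lattice_dets}, rule out $c_1c_2<0$ using $\langle\underline{\vec z_1},\underline{\vec z_2}\rangle\leq0$, get sign alignment from Corollary \ref{cor:bound_for_z1} and \eqref{eq:bound_for_the_lattice_semiball} so that $|L_{\pmb\alpha}(\vec z)|=|c_1||L_{\pmb\alpha}(\vec z_1)|+|c_2||L_{\pmb\alpha}(\vec z_2)|$, and finish with Lemma \ref{l:bound_for_z1} together with $|\underline{\vec z}|/|\underline{\vec z_1}|\geq|c_2|D_{1,2}$. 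The only difference is that you spell out the Gram--Schmidt justification of the reduction step, which the paper leaves implicit.
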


\begin{proof}
  It follows from \eqref{eq:bound_for_the_lattice_dets} that each $\vec z\in\La_{1,d-1}$ such that $|\underline{\vec z}|\leq|\underline{\vec z_2}|$ lies in $\La_{1,2}$, i.e.
  \[\vec z=c_1\vec z_1+c_2\vec z_2,\quad c_1,c_2\in\Z.\]
  Besides that, since $\langle\underline{\vec z_1},\underline{\vec z_2}\rangle\leq0$, we have $|c_1\underline{\vec z_1}+c_2\underline{\vec z_2}|>|\underline{\vec z_2}|$ whenever $c_1c_2<0$. If $c_2=0$, we obviously have $|L_{\pmb\alpha}(\vec z)|\geq|L_{\pmb\alpha}(\vec z_1)|$. So, since we exclude $\pm\vec z_2$, we may assume that
  \[c_1\geq1\quad\text{ and }\quad c_2\geq1.\]
  By Corollary \ref{cor:bound_for_z1} we have
  \[\langle\pmb\alpha,\vec z_1\rangle\cdot\langle\pmb\alpha_2,\vec z_1\rangle>0\]
  for each $\pmb\alpha\in\gS_2$. Thus, if $\pmb\alpha$ satisfies additionally \eqref{eq:bound_for_the_lattice_semiball}, then either $\langle\pmb\alpha,\vec z_2\rangle$ is zero, or its sign coincides with that of $\langle\pmb\alpha,\vec z_1\rangle$. Hence, for $\vec z=c_1\vec z_1+c_2\vec z_2$ with positive integer $c_1$, $c_2$, we have
  \[|L_{\pmb\alpha}(\vec z)|=
    |\langle\pmb\alpha,\vec z\rangle|=
    c_1|\langle\pmb\alpha,\vec z_1\rangle|+c_2|\langle\pmb\alpha,\vec z_2\rangle|\geq
    |\langle\pmb\alpha,\vec z_1\rangle|=
    |L_{\pmb\alpha}(\vec z_1)|.\]
  Applying Lemma \ref{l:bound_for_z1} we get
  \[|L_{\pmb\alpha}(\vec z)|\cdot|\underline{\vec z}|^{d-1}\geq
    |L_{\pmb\alpha}(\vec z_1)|\cdot|\underline{\vec z}|^{d-1}\geq
    \frac1{D_{2,d-1}B_1^{d-1}}\bigg(1-\frac1{2B_1B_2}\bigg)
    \frac{|\underline{\vec z}|^{d-1}}{|\underline{\vec z_1}|^{d-1}}\,.\]
  It remains to observe that
  \[\frac{|\underline{\vec z}|}{|\underline{\vec z_1}|}=
    \frac{|\underline{\vec z}|\cdot|\underline{\vec z_1}|}{|\underline{\vec z_1}|^2}\geq
    \frac{|\underline{\vec z}\wedge\underline{\vec z_1}|}{|\underline{\vec z_1}|^2}=
    \frac{c_2|\underline{\vec z_2}\wedge\underline{\vec z_1}|}{|\underline{\vec z_1}|^2}=
    c_2D_{1,2}\geq D_{1,2}.\]
\end{proof}

Let us combine Lemmas \ref{l:bound_for_the_outside_of_the_lattice}, \ref{l:bound_for_z1}, \ref{l:bound_for_the_lattice} into one statement, that we shall use to prove the induction step. To do so we just need to shift the indices by a nonnegative integer $k$ and adjust the result a little bit. Then Lemmas \ref{l:bound_for_the_outside_of_the_lattice}, \ref{l:bound_for_z1}, \ref{l:bound_for_the_lattice} merge into

\begin{lemma} \label{l:three_lemmas_merged}
  Given $k\in\Z_{\geq0}$, let $\vec z_{k+1},\ldots,\vec z_{k+d}$ be a basis of $\Z^d$, $|\underline{\vec z_{k+1}}|\leq|\underline{\vec z_{k+2}}|\leq|\underline{\vec z_{k+3}}|$, $\langle\underline{\vec z_{k+1}},\underline{\vec z_{k+2}}\rangle\leq0$. If $d\geq4$, let
  \begin{equation*} 
    \det\underline\La_{{k+1},n}>|\underline{\vec z_{k+2}}|\det\underline\La_{{k+1},n-1},\qquad n=3,\ldots,d-1.
  \end{equation*}
  Let $\pmb\alpha$ be an arbitrary point in $\gS_{k+1}\cap\interior\gS_{k+2}$ satisfying
  \begin{equation} \label{eq:three_lemmas_merged_hemisphere}
    \langle\pmb\alpha,\vec z_{k+2}\rangle\cdot\langle\pmb\alpha_{k+2},\vec z_{k+1}\rangle\geq0.
  \end{equation}
  Then the following statements hold:

  \textup{(i)}
  for any $\vec z\in\Z^d$ such that $|\underline{\vec z}|\leq|\underline{\vec z_{k+2}}|$, $\vec z\neq\vec 0,\pm\vec z_{k+2}$ we have
  \[|L_{\pmb\alpha}(\vec z)|\geq
    |L_{\pmb\alpha}(\vec z_{k+1})|>
    |L_{\pmb\alpha}(\vec z_{k+2})|;\]

  \textup{(ii)} we have
  \[\frac1{2D_{k+2,d-1}B_{k+1}^{d-1}}\leq
    |L_{\pmb\alpha}(\vec z_{k+1})|\cdot|\underline{\vec z_{k+1}}|^{d-1}\leq
    \frac3{2D_{k+2,d-1}B_{k+1}^{d-1}}\,;\]

  \textup{(iii)} for any $\vec z\in\Z^d$ which is not a multiple of $\vec z_{k+1}$, is different from $\pm\vec z_{k+2}$, and satisfies
  $|\underline{\vec z_{k+1}}|\leq|\underline{\vec z}|\leq|\underline{\vec z_{k+2}}|$ we have
  \[|L_{\pmb\alpha}(\vec z)|\cdot|\underline{\vec z}|^{d-1}\geq
    \min\bigg(\frac1{2D_{k+1,d-1}}\,,
    \frac{D_{k+1,2}^{d-1}}{2D_{k+2,d-1}B_{k+1}^{d-1}}\bigg).\]
\end{lemma}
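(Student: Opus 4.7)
The plan is to simply re-apply Lemmas \ref{l:bound_for_the_outside_of_the_lattice}, \ref{l:bound_for_z1}, and \ref{l:bound_for_the_lattice} to the shifted tuple $\vec z_{k+1},\ldots,\vec z_{k+d}$ and repackage the three conclusions into the form stated, using the hypotheses $B_{k+1},B_{k+2}\geq1$ (which follow from the monotonicity of $|\underline{\vec z_j}|$) to absorb the $(1\pm\tfrac1{2B_1B_2})$-factor and to replace the strict inequality $\vec z\neq\pm\vec z_{k+2}$ handled in Lemma \ref{l:bound_for_the_lattice}.

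First I will derive (ii). The shifted version of Lemma \ref{l:bound_for_z1} gives
\[
\frac1{D_{k+2,d-1}B_{k+1}^{d-1}}\bigl(1-\tfrac1{2B_{k+1}B_{k+2}}\bigr)\leq |L_{\pmb\alpha}(\vec z_{k+1})|\cdot|\underline{\vec z_{k+1}}|^{d-1}\leq \frac1{D_{k+2,d-1}B_{k+1}^{d-1}}\bigl(1+\tfrac1{2B_{k+1}B_{k+2}}\bigr),
\]
and since $B_{k+1}B_{k+2}\geq1$ we have $1/2\leq 1\pm\tfrac1{2B_{k+1}B_{k+2}}\leq 3/2$, which yields (ii). This part only needs $\pmb\alpha\in\gS_{k+2}$.

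Next I will prove (i) by splitting on whether $\vec z\in\La_{k+1,d-1}$ or not. If $\vec z\notin\La_{k+1,d-1}$, the shifted Lemma \ref{l:bound_for_the_outside_of_the_lattice}(i) gives $|L_{\pmb\alpha}(\vec z)|\geq|L_{\pmb\alpha}(\vec z_{k+1})|$ directly, using $\pmb\alpha\in\gS_{k+1}$. If $\vec z\in\La_{k+1,d-1}$, the shifted Lemma \ref{l:bound_for_the_lattice}(i) delivers the same inequality, using $\langle\underline{\vec z_{k+1}},\underline{\vec z_{k+2}}\rangle\leq0$, the chain of determinant inequalities, and \eqref{eq:three_lemmas_merged_hemisphere}. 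The remaining strict inequality $|L_{\pmb\alpha}(\vec z_{k+1})|>|L_{\pmb\alpha}(\vec z_{k+2})|$ is where the assumption $\pmb\alpha\in\interior\gS_{k+2}$ comes in: since $\pmb\alpha_{k+2}\perp\vec z_{k+2}$,
\[
|L_{\pmb\alpha}(\vec z_{k+2})|=|\langle\underline{\pmb\alpha}-\underline{\pmb\alpha_{k+2}},\underline{\vec z_{k+2}}\rangle|<R_{k+2}|\underline{\vec z_{k+2}}|=\frac1{2B_{k+2}\det\underline\La_{k+2,d-1}}\leq\frac1{2\det\underline\La_{k+2,d-1}},
\]
while the lower bound from (ii), after multiplying through by $D_{k+2,d-1}B_{k+1}^{d-1}|\underline{\vec z_{k+1}}|^{d-1}=\det\underline\La_{k+2,d-1}$, gives $|L_{\pmb\alpha}(\vec z_{k+1})|\geq \tfrac1{2\det\underline\La_{k+2,d-1}}$, closing the gap.

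Finally, for (iii), I again split by whether $\vec z\in\La_{k+1,d-1}$. The shifted Lemma \ref{l:bound_for_the_outside_of_the_lattice}(ii) handles $\vec z\notin\La_{k+1,d-1}$ and yields the first term in the minimum, $\tfrac1{2D_{k+1,d-1}}$. The shifted Lemma \ref{l:bound_for_the_lattice}(ii) handles $\vec z\in\La_{k+1,d-1}$ not a multiple of $\vec z_{k+1}$ and gives $\tfrac{D_{k+1,2}^{d-1}}{D_{k+2,d-1}B_{k+1}^{d-1}}(1-\tfrac1{2B_{k+1}B_{k+2}})$, which after again using $B_{k+1}B_{k+2}\geq1$ becomes the second term $\tfrac{D_{k+1,2}^{d-1}}{2D_{k+2,d-1}B_{k+1}^{d-1}}$. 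Taking the minimum yields (iii). The only nontrivial bookkeeping, and the one point where care is needed, is verifying that the hypotheses of all three underlying lemmas (especially \eqref{eq:bound_for_the_lattice_semiball} in Lemma \ref{l:bound_for_the_lattice}) are met after index shift — but this is exactly \eqref{eq:three_lemmas_merged_hemisphere} together with the given monotonicity conditions, so no genuine obstacle arises.
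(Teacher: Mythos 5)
Your proposal is correct and follows essentially the same route as the paper: parts (ii) and (iii) are the shifted Lemmas \ref{l:bound_for_z1}, \ref{l:bound_for_the_outside_of_the_lattice}, \ref{l:bound_for_the_lattice} with the $(1\pm\tfrac1{2B_{k+1}B_{k+2}})$ factors absorbed via $B_{k+1},B_{k+2}\geq1$, and part (i) reduces to the strict comparison $|L_{\pmb\alpha}(\vec z_{k+2})|<\tfrac1{2\det\underline\La_{k+2,d-1}}\leq|L_{\pmb\alpha}(\vec z_{k+1})|$ obtained from $\pmb\alpha\in\interior\gS_{k+2}$. The only cosmetic difference is that the paper re-derives the lower bound on $|L_{\pmb\alpha}(\vec z_{k+1})|$ directly from the triangle inequality against $\pmb\alpha_{k+2}$, whereas you recycle the lower bound already established in (ii); these are the same estimate.
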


\begin{proof}
  Statements \textup{(ii)} and \textup{(iii)} follow immediately from Lemmas \ref{l:bound_for_the_outside_of_the_lattice}, \ref{l:bound_for_z1}, \ref{l:bound_for_the_lattice} in view of the inequalities $B_{k+1}\geq1$, $B_{k+2}\geq1$ provided by the assumption $|\underline{\vec z_{k+1}}|\leq|\underline{\vec z_{k+2}}|\leq|\underline{\vec z_{k+3}}|$.

  As for \textup{(i)}, the inequality $|L_{\pmb\alpha}(\vec z)|\geq|L_{\pmb\alpha}(\vec z_{k+1})|$ follows from Lemmas \ref{l:bound_for_the_outside_of_the_lattice}, \ref{l:bound_for_the_lattice}, so, it suffices to show that $|L_{\pmb\alpha}(\vec z_{k+1})|>|L_{\pmb\alpha}(\vec z_{k+2})|$.

  Suppose $\pmb\alpha\in\interior\gS_{k+2}$. Then
  \begin{multline*}
    |L_{\pmb\alpha}(\vec z_{k+2})|=
    |\langle\pmb\alpha,\vec z_{k+2}\rangle|=
    |\langle\pmb\alpha-\pmb\alpha_{k+2},\vec z_{k+2}\rangle|=
    |\langle\underline{\pmb\alpha}-\underline{\pmb\alpha_{k+2}},\underline{\vec z_{k+2}}\rangle|< \\ <
    R_{k+2}|\underline{\vec z_{k+2}}|=
    \frac1{2B_{k+2}\det\underline{\La}_{k+2,d-1}}\leq
    \frac1{2\det\underline{\La}_{k+2,d-1}}\,.
  \end{multline*}
  On the other hand, since $|\langle\pmb\alpha_{k+2},\vec z_{k+1}\rangle|=\big(\det\underline{\La}_{k+2,d-1}\big)^{-1}$,
  \begin{multline*}
    |L_{\pmb\alpha}(\vec z_{k+1})|=
    |\langle\pmb\alpha,\vec z_{k+1}\rangle|\geq
    |\langle\pmb\alpha_{k+2},\vec z_{k+1}\rangle|-
    |\langle\pmb\alpha-\pmb\alpha_{k+2},\vec z_{k+1}\rangle|= \\ =
    \frac1{\det\underline{\La}_{k+2,d-1}}-
    |\langle\underline{\pmb\alpha}-\underline{\pmb\alpha_{k+2}},\underline{\vec z_{k+1}}\rangle|>
    \frac1{\det\underline{\La}_{k+2,d-1}}-R_{k+2}|\underline{\vec z_{k+1}}|= \\ =
    \bigg(1-\frac1{2B_{k+1}B_{k+2}}\bigg)\frac1{\det\underline{\La}_{k+2,d-1}}\geq
    \frac1{2\det\underline{\La}_{k+2,d-1}}\,.
  \end{multline*}
  Hence, indeed, $|L_{\pmb\alpha}(\vec z_{k+1})|>|L_{\pmb\alpha}(\vec z_{k+2})|$.
\end{proof}

\section{Distance between the convergents} \label{sec:distance_between_alphas}

The following observation generalizes the classical statement that for two consecutive convergents $p_k/q_k$ and $p_{k+1}/q_{k+1}$ to a real number we have
\[\bigg|\frac{p_k}{q_k}-\frac{p_{k+1}}{q_{k+1}}\bigg|=\frac1{q_kq_{k+1}}\,.\]

\begin{lemma} \label{l:distance_between_alphas}
  Suppose $\vec z_1,\ldots,\vec z_d\in\R^d$ are linearly independent. Then
  \begin{equation} \label{eq:distance_between_alphas}
    |\pmb\alpha_1-\pmb\alpha_2|=\frac{\det\underline\La_{2,d-2}\cdot\det\La_{1,d}\ \ \,}
                                     {\det\underline\La_{1,d-1}\cdot\det\underline\La_{2,d-1}}\,.
  \end{equation}
\end{lemma}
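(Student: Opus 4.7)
The plan is to exploit the fact that $\pmb\alpha_1$ and $\pmb\alpha_2$ both lie in $\pi_d$ and are \emph{both} orthogonal to the $d-2$ common vectors $\vec z_2,\ldots,\vec z_{d-1}$. From this the difference $\pmb\alpha_1-\pmb\alpha_2$ is pinned down up to a single scalar, which I can extract by evaluating it on $\vec z_d$.

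First I would observe that $\pmb\alpha_1-\pmb\alpha_2$ has vanishing last coordinate, so $|\pmb\alpha_1-\pmb\alpha_2|=|\underline{\pmb\alpha_1}-\underline{\pmb\alpha_2}|$, and the common orthogonality gives $\underline{\pmb\alpha_1}-\underline{\pmb\alpha_2}\perp\underline{\vec z_2},\ldots,\underline{\vec z_{d-1}}$ in $\R^{d-1}$. Hence $\underline{\pmb\alpha_1}-\underline{\pmb\alpha_2}$ lies on the 1-dimensional orthogonal complement of $\spanned(\underline{\vec z_2},\ldots,\underline{\vec z_{d-1}})$ inside $\R^{d-1}$.

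Next I would pair $\pmb\alpha_1-\pmb\alpha_2$ with $\vec z_d$. Since $\pmb\alpha_2\perp\vec z_d$, we get
\[\langle\pmb\alpha_1-\pmb\alpha_2,\vec z_d\rangle=\langle\pmb\alpha_1,\vec z_d\rangle,\]
and simultaneously this inner product factors as $|\pmb\alpha_1-\pmb\alpha_2|$ times the length of the component of $\underline{\vec z_d}$ orthogonal to $\spanned(\underline{\vec z_2},\ldots,\underline{\vec z_{d-1}})$. That orthogonal height equals the ratio of volumes
\[\frac{\det\underline\La_{2,d-1}}{\det\underline\La_{2,d-2}}.\]
Thus
\[|\pmb\alpha_1-\pmb\alpha_2|=|\langle\pmb\alpha_1,\vec z_d\rangle|\cdot\frac{\det\underline\La_{2,d-2}}{\det\underline\La_{2,d-1}}.\]

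Finally I would compute $|\langle\pmb\alpha_1,\vec z_d\rangle|$ by writing $\pmb\alpha_1$ as a normalized wedge product: $\pmb\alpha_1$ is parallel to the Hodge dual $\vec z_1\wedge\cdots\wedge\vec z_{d-1}$, whose $d$-th coordinate is $\pm\det(\underline{\vec z_1},\ldots,\underline{\vec z_{d-1}})=\pm\det\underline\La_{1,d-1}$. The normalization condition $(\pmb\alpha_1)_d=1$ then gives
\[\langle\pmb\alpha_1,\vec z_d\rangle=\frac{\det(\vec z_1,\ldots,\vec z_d)}{\pm\det\underline\La_{1,d-1}},\qquad
|\langle\pmb\alpha_1,\vec z_d\rangle|=\frac{\det\La_{1,d}}{\det\underline\La_{1,d-1}}.\]
Substituting this into the previous display yields \eqref{eq:distance_between_alphas}.

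The only delicate point is the identification of the orthogonal height with $\det\underline\La_{2,d-1}/\det\underline\La_{2,d-2}$, which requires being clear that these ``det'' symbols denote the $l$-dimensional volumes of the parallelepipeds spanned by the $l$ vectors (not signed determinants of square matrices); I would state this explicitly and then everything reduces to the standard base-times-height formula for volumes.
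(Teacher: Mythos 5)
Your argument is correct, and it reaches the identity by a genuinely more direct route than the paper. Both proofs start from the same structural observation — $\pmb\alpha_1-\pmb\alpha_2$ has zero last coordinate and is orthogonal to the $d-2$ common vectors $\vec z_2,\ldots,\vec z_{d-1}$, hence lives in a one-dimensional line of $\R^{d-1}$ — but from there you simply pair the difference with $\vec z_d$, use $\pmb\alpha_2\perp\vec z_d$, and invoke the base-times-height formula $\mathrm{dist}\big(\underline{\vec z_d},\spanned(\underline{\vec z_2},\ldots,\underline{\vec z_{d-1}})\big)=\det\underline\La_{2,d-1}/\det\underline\La_{2,d-2}$ together with the Hodge-dual expression $|\langle\pmb\alpha_1,\vec z_d\rangle|=\det\La_{1,d}/\det\underline\La_{1,d-1}$. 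The paper instead projects everything onto a two-dimensional complement $\cV_4^\perp$ of $\spanned(\underline{\vec z_2},\ldots,\underline{\vec z_{d-1}})$, renormalizes the vectors, collapses the wedge-product ratio to $|\vec v_1\wedge\vec v_d|$, and finishes with a planar determinant identity plus an induction showing $\pmb\alpha_1-\pmb\alpha_2=\pmb\alpha_1'-\pmb\alpha_2'$. Your version avoids that reduction entirely and is shorter; what it gives up is only the geometric picture of the two convergents as a genuinely two-dimensional configuration, which the paper's $\cV_4^\perp$ construction makes explicit. Two small points worth stating for completeness: (1) the well-definedness of $\pmb\alpha_1,\pmb\alpha_2$ forces $\vec e_d\notin\spanned(\vec z_1,\ldots,\vec z_{d-1}),\spanned(\vec z_2,\ldots,\vec z_d)$, which is what guarantees that $\underline{\vec z_2},\ldots,\underline{\vec z_{d-1}}$ are linearly independent and hence that your orthogonal complement really is one-dimensional and the determinants in the denominators are nonzero; (2) as you note, the symbols $\det\underline\La_{k,l}$ must be read as $l$-dimensional covolumes $|\underline{\vec z_k}\wedge\ldots\wedge\underline{\vec z_{k+l-1}}|$, which is indeed how the paper uses them.
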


\begin{proof}
  Set
  \[
    \cV_1=\spanned_\R(\vec z_1,\ldots,\vec z_{d-1}),\quad
    \cV_2=\spanned_\R(\vec z_2,\ldots,\vec z_d),\quad
    \cV_3=\spanned_\R(\vec z_2,\ldots,\vec z_{d-1}).
  \]
  Let $\cV_4$ denote the orthogonal projection of $\cV_3$ to the coordinate plane
  \[\Big\{\vec x=(x_1,\ldots,x_d)\in\R^d\,\Big|\,x_d=0 \Big\},\]
  and let $\cV_4^\perp$ be its orthogonal complement. Since it is assumed that $\pmb\alpha_1$ and $\pmb\alpha_2$ are defined correctly, we have
  \[\vec e_d=(0,\ldots,0,1)\notin\cV_1,\cV_2,\]
  whence
  \[\dim\cV_4=\dim\cV_3=d-2,\quad\dim\cV_4^\perp=2,\quad\cV_3+\cV_4^\perp=\R^d.\]       
  Denote by $\vec u_1$, $\vec u_d$ respectively the projections of $\vec z_1$, $\vec z_d$ to $\cV_4^\perp$ along $\cV_3$ and set
  \[\vec v_1=\frac{\vec u_1}{|\underline{\vec u_1}|},\qquad
    \vec v_d=\frac{\vec u_d}{|\underline{\vec u_d}|}.\]
  Denote also by $\vec v_2,\ldots,\vec v_{d-1}$ respectively the orthogonal projections of $\vec z_2,\ldots,\vec z_{d-1}$ to $\cV_4$.
  Then substitution of any $\vec z_k$ with $\vec v_k$ preserves the righthand side of \eqref{eq:distance_between_alphas}, so,
  \begin{multline*}
    \frac{|\underline{\vec z_2}\wedge\ldots\wedge\underline{\vec z_{d-1}}|\cdot|\vec z_1\wedge\ldots\wedge\vec z_d|}
         {|\underline{\vec z_1}\wedge\ldots\wedge\underline{\vec z_{d-1}}|\cdot|\underline{\vec z_2}\wedge\ldots\wedge\underline{\vec z_d}|}= \\ =
    \frac{|\vec v_2\wedge\ldots\wedge\vec v_{d-1}|\cdot|\vec v_1\wedge\ldots\wedge\vec v_d|\ \ \ }
         {|\underline{\vec v_1}\wedge\vec v_2\wedge\ldots\wedge\vec v_{d-1}|\cdot|\vec v_2\wedge\ldots\wedge\vec v_{d-1}\wedge\underline{\vec v_d}|}= \\ =
    \frac{|\vec v_2\wedge\ldots\wedge\vec v_{d-1}|^2\cdot|\vec v_1\wedge\vec v_d|}
         {|\vec v_2\wedge\ldots\wedge\vec v_{d-1}|^2\cdot|\underline{\vec v_1}|\cdot|\underline{\vec v_d}|}=|\vec v_1\wedge\vec v_d|.
  \end{multline*}
  Here we have a slight abuse of notation when we write $\underline{\vec v_1}$ and $\underline{\vec v_d}$ for the corresponding projections of $\vec v_1$ and $\vec v_d$.
  Finally, define $\pmb\alpha_1',\pmb\alpha_2'\in\pi_d$ by the conditions
  \[\pmb\alpha_1'\perp\vec v_1,\ldots,\vec v_{d-1},\quad
    \pmb\alpha_2'\perp\vec v_2,\ldots,\vec v_d,\]
  or, equivalently, by the conditions
  \[\pmb\alpha_1',\pmb\alpha_2'\in\pi_d\cap\cV_4^\perp,\quad\pmb\alpha_1'\perp\vec v_1,\quad\pmb\alpha_2'\perp\vec v_d.\]
  It is a simple two-dimensional exercise to show that
  \[|\pmb\alpha_1'-\pmb\alpha_2'|=|\det(\pmb\alpha_1',\pmb\alpha_2')|=|\det(\vec v_1,\vec v_d)|.\]
  Thus, it remains to show that
  \[|\pmb\alpha_1-\pmb\alpha_2|=|\pmb\alpha_1'-\pmb\alpha_2'|.\]
  Replacing $\vec v_d$ with $-\vec v_d$ if necessary, may assume that the pairs $\{\vec v_1,\vec e_d\}$ and $\{\vec v_d,\vec e_d\}$ are equally oriented. Then
  \[\vec v_1\wedge\vec e_d=\vec v_d\wedge\vec e_d.\]
  Therefore, for any $\vec u_2,\ldots,\vec u_{d-1}\in\R^d$ and any $\lambda\in\R$ we have
  \begin{multline*}
    \vec v_1\wedge(\vec u_2+\lambda\vec e_d)\wedge\vec u_3\wedge\ldots\wedge\vec u_{d-1}-
    \vec v_d\wedge(\vec u_2+\lambda\vec e_d)\wedge\vec u_3\wedge\ldots\wedge\vec u_{d-1}= \\ =
    \vec v_1\wedge\vec u_2\wedge\vec u_3\wedge\ldots\wedge\vec u_{d-1}-
    \vec v_d\wedge\vec u_2\wedge\vec u_3\wedge\ldots\wedge\vec u_{d-1},
  \end{multline*}
  whence it follows by induction that
  \[\pmb\alpha_1-\pmb\alpha_2=\pmb\alpha_1'-\pmb\alpha_2'.\]
\end{proof}

\begin{lemma} \label{l:nested_balls}
  Let $\vec z_1,\ldots,\vec z_d\in\Z^d$ be a basis of $\Z^d$. Suppose 
  \[|\underline{\vec z_3}|\geq|\underline{\vec z_1}|,\qquad\det\underline\La_{2,d-1}>3|\underline{\vec z_2}|\det\underline\La_{2,d-2}\,.\]
  Then
  \[\gS_2\subset\interior\gS_1,\qquad R_2<R_1/3\,.\]
\end{lemma}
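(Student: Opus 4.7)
I would prove the two assertions separately, establishing $R_2 < R_1/3$ first and then using Lemma \ref{l:distance_between_alphas} to bound $|\pmb\alpha_1 - \pmb\alpha_2|$, after which the containment $\gS_2 \subset \interior\gS_1$ follows at once from the triangle inequality.

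For $R_2 < R_1/3$, the natural quantity to compute is the ratio
\[ \frac{R_2}{R_1} = \frac{|\underline{\vec z_2}|\det\underline\La_{1,d-1}}{|\underline{\vec z_3}|\det\underline\La_{2,d-1}}. \]
The key tool is the base-times-height (Hadamard-type) bound $\det\underline\La_{1,d-1} \leq |\underline{\vec z_1}|\det\underline\La_{2,d-2}$, obtained by noting that $\det\underline\La_{1,d-1}$ equals $\det\underline\La_{2,d-2}$ times the distance from $\underline{\vec z_1}$ to the linear span of $\underline{\vec z_2},\ldots,\underline{\vec z_{d-1}}$ in $\R^{d-1}$. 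Plugging this in and then using the hypothesis $\det\underline\La_{2,d-1} > 3|\underline{\vec z_2}|\det\underline\La_{2,d-2}$ gives $R_2/R_1 < |\underline{\vec z_1}|/(3|\underline{\vec z_3}|) \leq 1/3$, where the final step uses $|\underline{\vec z_3}| \geq |\underline{\vec z_1}|$.

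For the containment, since $\vec z_1,\ldots,\vec z_d$ is a basis of $\Z^d$ we have $\det\La_{1,d} = 1$, so Lemma \ref{l:distance_between_alphas} simplifies to
\[ |\pmb\alpha_1 - \pmb\alpha_2| = \frac{\det\underline\La_{2,d-2}}{\det\underline\La_{1,d-1}\cdot\det\underline\La_{2,d-1}}. \]
Inserting the same hypothesis $\det\underline\La_{2,d-1} > 3|\underline{\vec z_2}|\det\underline\La_{2,d-2}$ yields
\[ |\pmb\alpha_1 - \pmb\alpha_2| < \frac{1}{3|\underline{\vec z_2}|\det\underline\La_{1,d-1}} = \frac{2R_1}{3}. \]
For any $\vec x \in \gS_2$, the triangle inequality then gives $|\vec x - \pmb\alpha_1| \leq R_2 + |\pmb\alpha_1 - \pmb\alpha_2| < R_1/3 + 2R_1/3 = R_1$, which places $\vec x$ in $\interior\gS_1$.

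\textbf{Main obstacle.} There is no real obstacle once Lemma \ref{l:distance_between_alphas} is available; both conclusions reduce to short algebraic manipulations. The only point of care is that the factor $3$ in the hypothesis has to be split between the two estimates, contributing $1/3$ to the bound on $R_2/R_1$ and $2/3$ to the bound on $|\pmb\alpha_1 - \pmb\alpha_2|/R_1$, so that both strict inequalities hold with the same hypothesis.
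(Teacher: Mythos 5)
Your proposal is correct and follows essentially the same route as the paper: the same ratio computation for $R_2/R_1$ using the base-times-height bound $\det\underline\La_{1,d-1}\leq|\underline{\vec z_1}|\det\underline\La_{2,d-2}$, and the same application of Lemma \ref{l:distance_between_alphas} (with $\det\La_{1,d}=1$) to get $|\pmb\alpha_1-\pmb\alpha_2|<2R_1/3$, followed by the triangle inequality. The split of the factor $3$ into $1/3+2/3$ is exactly how the paper's proof works.
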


\begin{proof}
  By Lemma \ref{l:distance_between_alphas} we have
  \[\frac{|\pmb\alpha_1-\pmb\alpha_2|}{R_1}=
    \frac{2|\underline{\vec z_2}|\det\underline\La_{2,d-2}}{\det\underline\La_{2,d-1}}<
    \frac23\,.\]
  On the other hand,
  \[\frac{R_2}{R_1}=
    \frac{|\underline{\vec z_2}|\det\underline\La_{1,d-1}}{|\underline{\vec z_3}|\det\underline\La_{2,d-1}}\leq
    \frac{|\underline{\vec z_2}|\cdot|\underline{\vec z_1}|\det\underline\La_{2,d-2}}{|\underline{\vec z_3}|\det\underline\La_{2,d-1}}<
    \frac13\,.\]
  Hence $|\pmb\alpha_1-\pmb\alpha_2|+R_2<R_1$, i.e. $\gS_2\subset\interior\gS_1$.
\end{proof}

\section{Basic Lemma}

Iterating Lemma \ref{l:three_lemmas_merged} we get the following statement, which is the main ingredient for proving Theorem \ref{t:dioph_type}.

\begin{lemma} \label{l:basic_lemma}
  Given $\beta>0$, let $(\vec z_k)_{k\in\N}$ be a sequence of points in $\Z^d$ such that $|\underline{\vec z_k}|\to\infty$ as $k\to\infty$.
  Suppose that for each $k$ large enough

  \textup{(a)} $\vec z_{k+1},\ldots,\vec z_{k+d}$ form a basis of $\Z^d$;

  \textup{(b)} $\langle\underline{\vec z_{k+1}},\underline{\vec z_{k+2}}\rangle\leq0$;

  \textup{(c)} $B_{k+1}\asymp D_{k+1,2}\asymp|\underline{\vec z_{k+1}}|^\beta$;

  \textup{(d)} if $d\geq4$, then

  \hskip6,8mm $\det\underline\La_{k+1,n}>|\underline{\vec z_{k+2}}|\det\underline\La_{k+1,n-1}$,

  \hskip6,8mm $\det\underline\La_{k+1,n}\asymp|\underline{\vec z_{k+2}}|\det\underline\La_{k+1,n-1},\ \ \ n=3,\ldots,d-1$;

  \textup{(e)} $\langle\pmb\alpha_{k+3},\vec z_{k+2}\rangle\cdot\langle\pmb\alpha_{k+3},\vec z_{k+1}\rangle>0$.
  \\
  Then there is an $\pmb\alpha_\infty$ in $\pi_d$ such that $\pmb\alpha_k\to\pmb\alpha_\infty$ as $k\to\infty$ and
  \[
  \begin{aligned}
    & |L_{\pmb\alpha_\infty}(\vec z)|\cdot|\underline{\vec z}|^{d-1}\asymp|\underline{\vec z}|^{-f_d(\beta)}
    & \text{ for }\ \vec z\in(\vec z_k),\ \ \\
    & |L_{\pmb\alpha_\infty}(\vec z)|\cdot|\underline{\vec z}|^{d-1}\gg|\underline{\vec z}|^{-f_d(\beta)+(d-1)\beta}
    & \text{ for }\ \vec z\notin(\Z\vec z_k).
  \end{aligned}
  \]
  Here the constants implied by ``\ $\asymp$'' and ``\ $\gg$'' are assumed to depend only on $d$ and $\beta$.

  Moreover, there is $K\in\N$ such that the sequence $(\vec z_k)_{k\geq K}$ consists of consecutive best approximation vectors for $L_{\pmb\alpha_\infty}$.
\end{lemma}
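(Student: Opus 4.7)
The plan is to iterate Lemma \ref{l:three_lemmas_merged} along the sequence and pass to a limit via Lemma \ref{l:nested_balls}. I first verify that Lemma \ref{l:nested_balls} applies at each shift $k$: the ratio $|\underline{\vec z_{k+3}}|/|\underline{\vec z_{k+1}}|\geq 1$ is immediate from (c), while the requirement $\det\underline\La_{k+2,d-1}>3|\underline{\vec z_{k+2}}|\det\underline\La_{k+2,d-2}$ comes from (d) when $d\geq 4$ (as $B_{k+2}\to\infty$) and from $D_{k+2,2}\to\infty$ when $d=3$. Hence the balls $\gS_k$ are strictly nested with $R_{k+1}<R_k/3$, so $\bigcap_k\gS_k$ reduces to a single point $\pmb\alpha_\infty$ and $\pmb\alpha_k\to\pmb\alpha_\infty$.

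Next I establish the key parameter identity. Iterating (d) yields $\det\underline\La_{k+1,n}\asymp|\underline{\vec z_{k+2}}|^{n-2}\det\underline\La_{k+1,2}$ for $3\leq n\leq d-1$, whence
\[D_{k+1,d-1}\asymp B_{k+1}^{d-3}D_{k+1,2}\asymp|\underline{\vec z_{k+1}}|^{(d-2)\beta}.\]
Combined with $|\underline{\vec z_{k+2}}|\asymp|\underline{\vec z_{k+1}}|^{1+\beta}$, the same calculation shifted up by one gives $D_{k+2,d-1}\asymp|\underline{\vec z_{k+1}}|^{(d-2)\beta(1+\beta)}$. Multiplying by $B_{k+1}^{d-1}\asymp|\underline{\vec z_{k+1}}|^{(d-1)\beta}$ one sees that the exponents add up to $(d-2)\beta(1+\beta)+(d-1)\beta=f_d(\beta)$, so
\[D_{k+2,d-1}\,B_{k+1}^{d-1}\asymp|\underline{\vec z_{k+1}}|^{f_d(\beta)}.\]
This single identity translates all the abstract bounds of Lemma \ref{l:three_lemmas_merged} into concrete powers of $|\underline{\vec z_{k+1}}|$.

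Now I apply Lemma \ref{l:three_lemmas_merged} at each shift $k$ to the point $\pmb\alpha_\infty$. Membership $\pmb\alpha_\infty\in\gS_{k+1}\cap\interior\gS_{k+2}$ is the nesting, and the sign condition \eqref{eq:three_lemmas_merged_hemisphere} is the conclusion of the shifted Corollary \ref{cor:bound_for_z1_two_bases} fed with (e), applied to $\pmb\alpha=\pmb\alpha_\infty\in\gS_{k+3}$. Part (ii) yields at once $|L_{\pmb\alpha_\infty}(\vec z_{k+1})|\cdot|\underline{\vec z_{k+1}}|^{d-1}\asymp|\underline{\vec z_{k+1}}|^{-f_d(\beta)}$, which is the first asserted asymptotic. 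For $\vec z\in\Z^d\setminus\bigcup_j\Z\vec z_j$ with $|\underline{\vec z}|$ large, I pick $k$ with $|\underline{\vec z_{k+1}}|\leq|\underline{\vec z}|\leq|\underline{\vec z_{k+2}}|$ and invoke part (iii); the two terms in the min have orders $|\underline{\vec z_{k+1}}|^{-(d-2)\beta}$ and $|\underline{\vec z_{k+1}}|^{-f_d(\beta)+(d-1)\beta}$, and since $(d-2)\beta\leq(d-2)\beta(1+\beta)=f_d(\beta)-(d-1)\beta$ the second is the smaller one. Its exponent is negative, so the bound survives the passage from $|\underline{\vec z_{k+1}}|$ to $|\underline{\vec z}|$. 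The best-approximation statement then follows from part (i) with shift $k-1$: every $\vec z\neq\vec 0,\pm\vec z_{k+1}$ with $|\underline{\vec z}|\leq|\underline{\vec z_{k+1}}|$ satisfies $|L_{\pmb\alpha_\infty}(\vec z)|\geq|L_{\pmb\alpha_\infty}(\vec z_k)|>|L_{\pmb\alpha_\infty}(\vec z_{k+1})|$, which simultaneously certifies each $\vec z_{k+1}$ as a best approximation vector and rules out any intruder in the corresponding norm gap.

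The main obstacle is the parameter bookkeeping of the middle step: one must iterate (d) carefully to find the correct order of $\det\underline\La_{k+1,n}$, combine it with (c) so that the total exponent comes out exactly $f_d(\beta)$, and check that the smaller term in (iii) of Lemma \ref{l:three_lemmas_merged} is precisely $|\underline{\vec z_{k+1}}|^{-f_d(\beta)+(d-1)\beta}$ — this is what makes the two rates along and off the sequence separate cleanly at the level $(d-1)\beta$. Propagating the sign condition through Corollary \ref{cor:bound_for_z1_two_bases} is conceptually delicate but mechanical once the nesting from Lemma \ref{l:nested_balls} is secured.
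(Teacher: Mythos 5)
Your proposal is correct and follows essentially the same route as the paper: nested balls via Lemma \ref{l:nested_balls} to obtain $\pmb\alpha_\infty$, the shifted Corollary \ref{cor:bound_for_z1_two_bases} fed with (e) to secure the sign condition \eqref{eq:three_lemmas_merged_hemisphere}, and then Lemma \ref{l:three_lemmas_merged} combined with the identity $D_{k+2,d-1}B_{k+1}^{d-1}\asymp|\underline{\vec z_{k+1}}|^{f_d(\beta)}$. Your explicit comparison of the two terms in the minimum of part (iii) is exactly the (implicit) computation in the paper, so no substantive difference remains.
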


\begin{proof}
  It follows from \textup{(c)} that $B_k,D_{k,2}\to\infty$ as $k\to\infty$. Let $K$ be large enough, so that for each $k\geq K-2$ along with \textup{(a)}--\textup{(e)} we have
  \begin{equation} \label{eq:B_geq_3}
    B_{k+1}>3,\ D_{k+1,2}>3.
  \end{equation}
  Then by \textup{(d)}
  \[\det\underline\La_{k+2,d-1}>3|\underline{\vec z_{k+2}}|\det\underline\La_{k+2,d-2}\]
  regardless of whether $d=3$ or $d\geq4$. Thus, we can apply Lemma \ref{l:nested_balls} and see that
  \begin{equation} \label{eq:nesting}
    \gS_{k+2}\subset\interior\gS_{k+1}\quad\text{ for every $k\geq K-2$}
  \end{equation}
  and
  \[R_k\to\infty\quad\text{ as }\quad k\to\infty.\]
  This proves that the limit point $\pmb\alpha_\infty$ is defined correctly,
  \[\{\pmb\alpha_\infty\}=\bigcap_{k\geq K}\gS_k.\]
  It also follows from \textup{(a)}, \eqref{eq:B_geq_3}, \eqref{eq:nesting}, and Corollary \ref{cor:bound_for_z1_two_bases} that for $k\geq K-2$ we have
  \[\langle\pmb\alpha,\vec z_{k+2}\rangle\cdot\langle\pmb\alpha_{k+2},\vec z_{k+1}\rangle>0
    \quad\text{ for each }\pmb\alpha\in\gS_{k+3}.\]
  Hence $\pmb\alpha_\infty$ satisfies the condition \eqref{eq:three_lemmas_merged_hemisphere} of Lemma \ref{l:three_lemmas_merged} for every $k\geq K-2$.

  By statement \textup{(i)} of Lemma \ref{l:three_lemmas_merged} the sequence $(\vec z_k)_{k\geq K}$ consists of consecutive best approximation vectors for $L_{\pmb\alpha_\infty}$.

  In order to apply statements \textup{(ii)} and \textup{(iii)} of Lemma \ref{l:three_lemmas_merged} let us estimate $D_{k+2,d-1}$. By \textup{(c)} and \textup{(d)} we have
  \begin{multline*}
    D_{k+2,d-1}=
    \frac{\det\underline\La_{k+2,d-1}}{|\underline{\vec z_{k+2}}|^{d-1}}\asymp
    \frac{|\underline{\vec z_{k+3}}|^{d-3}\cdot\det\underline\La_{k+2,2}}{|\underline{\vec z_{k+2}}|^{d-1}}=
    B_{k+2}^{d-3}D_{k+2,2}\asymp \\ \asymp
    B_{k+2}^{d-2}\asymp
    |\underline{\vec z_{k+2}}|^{(d-2)\beta}=
    \big(B_{k+1}|\underline{\vec z_{k+1}}|\big)^{(d-2)\beta}\asymp
    |\underline{\vec z_{k+1}}|^{(d-2)(\beta^2+\beta)}.
  \end{multline*}
  Consider an arbitrary $\vec z\in\Z^d$, $|\underline{\vec z}|\geq|\underline{\vec z_K}|$. Choosing $k$ so that $|\underline{\vec z_{k+1}}|\leq|\underline{\vec z}|\leq|\underline{\vec z_{k+2}}|$ and applying statements \textup{(ii)} and \textup{(iii)} of Lemma \ref{l:three_lemmas_merged} we get
  \[|L_{\pmb\alpha_\infty}(\vec z_{k+1})|\cdot|\underline{\vec z_{k+1}}|^{d-1}\asymp
    \frac1{D_{k+2,d-1}B_{k+1}^{d-1}}\asymp
    |\underline{\vec z_{k+1}}|^{-(d-2)(\beta^2+\beta)-(d-1)\beta}=
    |\underline{\vec z_{k+1}}|^{-f(\beta)},\]
  \[|L_{\pmb\alpha_\infty}(\vec z)|\cdot|\underline{\vec z}|^{d-1}\gg
    \frac1{D_{k+2,d-1}}\asymp
    |\underline{\vec z_{k+1}}|^{-(d-2)(\beta^2+\beta)}\geq
    |\underline{\vec z}|^{-f(\beta)+(d-1)\beta}.
    \hskip4mm\]
\end{proof}

\section{Explicit construction and proof of Theorem \ref{t:dioph_type}} \label{sec:explicit_construction}

Theorem \ref{t:dioph_type} is an immediate consequence of Lemma \ref{l:explicit_construction}.

\begin{lemma} \label{l:explicit_construction}
  Given $\beta>0$, there is an $\pmb\alpha\in\pi_d$ such that the sequence $(\vec z_k)$ of best approximation vectors for $L_{\pmb\alpha}$ satisfies the hypothesis of Lemma \ref{l:basic_lemma}, and the set of asymptotic directions for that sequence contains exactly two (pairs of) points.
\end{lemma}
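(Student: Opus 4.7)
The plan is to construct an explicit sequence $(\vec z_k)_{k\in\N}\subset\Z^d$ satisfying hypotheses (a)--(e) of Lemma \ref{l:basic_lemma} and having exactly two pairs of accumulation points among $\{\vec z_k/|\vec z_k|\}$; Lemma \ref{l:basic_lemma} will then deliver an $\pmb\alpha\in\pi_d$ with all properties required by Theorem \ref{t:dioph_type}, namely that the tail of $(\vec z_k)$ is exactly the sequence of best approximation vectors of $L_{\pmb\alpha}$ and that the set of asymptotic directions is as prescribed.

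I would fix two linearly independent unit vectors $\vec u_1,\vec u_2\in\R^d$ with $\langle\underline{\vec u_1},\underline{\vec u_2}\rangle=0$, auxiliary unit vectors $\vec v_3,\ldots,\vec v_{d-1}\in\R^{d-1}$ forming an orthonormal basis of the orthogonal complement of $\spanned(\underline{\vec u_1},\underline{\vec u_2})$ in $\R^{d-1}$, and a rapidly growing sequence of positive reals $N_k$ with $N_{k+1}\asymp N_k^{1+\beta}$. Starting from some initial basis $\vec z_1,\ldots,\vec z_d$ of $\Z^d$, I would then build the sequence inductively via a recurrence
\[\vec z_{k+d}\;=\;\epsilon_k\vec z_k\;+\;\sum_{j=1}^{d-1}c_{k,j}\vec z_{k+j},\qquad\epsilon_k\in\{\pm1\},\ c_{k,j}\in\Z,\]
which automatically preserves the basis property (a) because the transition matrix from $(\vec z_{k+1},\ldots,\vec z_{k+d})$ to $(\vec z_{k+2},\ldots,\vec z_{k+d+1})$ has determinant $\pm\epsilon_{k+1}=\pm1$. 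The integer coefficients are tuned so that $|\underline{\vec z_{k+d}}|\asymp N_{k+d}$ and $\vec z_{k+d}/|\vec z_{k+d}|$ lies within $O(N_{k+d-1}/N_{k+d})$ of $\pm\vec u_1$ for odd $k+d$ and of $\pm\vec u_2$ for even $k+d$: the leading coefficient $c_{k,d-1}$ is of size $\asymp N_{k+d-1}^\beta$ and supplies the correct ratios $B_{k+1}\asymp D_{k+1,2}\asymp N_{k+1}^\beta$ required by (c), while the sub-leading $c_{k,1},\ldots,c_{k,d-2}$ are set so that the small transversal part of $\underline{\vec z_{k+d}}$ has a component of magnitude $\asymp|\underline{\vec z_{k+2}}|$ along $\vec v_n$ for each $n=3,\ldots,d-1$, yielding the growth rates $\det\underline\La_{k+1,n}\asymp|\underline{\vec z_{k+2}}|\det\underline\La_{k+1,n-1}$ of (d). The sign $\epsilon_k$, together with minor last-round adjustment of the sub-leading coefficients, is reserved to enforce (b) and (e): since $\langle\underline{\vec u_1},\underline{\vec u_2}\rangle=0$ makes the leading inner products $\langle\underline{\vec z_{k+1}},\underline{\vec z_{k+2}}\rangle$ and $\langle\pmb\alpha_{k+3},\vec z_{k+i}\rangle$ vanish, the signs of these lower-order expressions are determined (and can be prescribed) by the same combination of parameters.

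The main obstacle is the delicate coordination required by condition (d) when $d\geq4$: it demands that each $\underline{\vec z_{k+n}}$ for $n=3,\ldots,d-1$ lie within distance $\asymp|\underline{\vec z_{k+2}}|$ of $\spanned(\underline{\vec z_{k+1}},\ldots,\underline{\vec z_{k+n-1}})$, even though $|\underline{\vec z_{k+n}}|$ is vastly larger than $|\underline{\vec z_{k+2}}|$. The relation between the real-valued coefficients $c_{k,1},\ldots,c_{k,d-1}$ and the prescribed geometric data forms a non-degenerate linear system of $d-1$ equations in $d-1$ unknowns, solvable uniquely in $\R$; the auxiliary directions $\vec v_3,\ldots,\vec v_{d-1}$ provide the orthogonal directions in which the microscopic transversal components can be planted one at a time, and a careful integer adjustment (possible thanks to the large separation between $N_{k+d-1}$ and $N_{k+2}$) yields an integer solution of the required orders of magnitude. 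Once such a sequence has been produced, hypotheses (a)--(e) of Lemma \ref{l:basic_lemma} are checked directly and the asymptotic-directions claim is immediate from the alternating construction: the accumulation set of $\{\vec z_k/|\vec z_k|\}$ equals exactly $\{\pm\vec u_1,\pm\vec u_2\}$.
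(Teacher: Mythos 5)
Your overall skeleton is the paper's: build $(\vec z_k)$ inductively with $\vec z_{k+d}\in\pm\vec z_k+\La_{k+1,d-1}$ (which gives (a) for free), control the position of $\underline{\vec z_{k+d}}$ relative to its predecessors so that (b)--(e) hold, and invoke Lemma~\ref{l:basic_lemma}. The genuine gap is in your treatment of condition (d). First, the magnitudes are mis-indexed: the instances of (d) constraining the newly built $\vec z_{k+d}$ are those in which it is the \emph{last} vector, namely $\det\underline\La_{k+d+1-n,n}\asymp|\underline{\vec z_{k+d+2-n}}|\det\underline\La_{k+d+1-n,n-1}$, $n=3,\dots,d-1$; equivalently, the distance from $\underline{\vec z_{k+d}}$ to $\spanned_\R(\underline{\vec z_{k+d+1-n}},\dots,\underline{\vec z_{k+d-1}})$ must be $\asymp|\underline{\vec z_{k+d+2-n}}|$. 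These are $d-3$ \emph{different} targets, a hierarchy running from $|\underline{\vec z_{k+3}}|$ up to $|\underline{\vec z_{k+d-1}}|$, not a single magnitude $\asymp|\underline{\vec z_{k+2}}|$ planted along each $\vec v_n$. Second, and more fundamentally, these are distances to spans of the \emph{actual} preceding vectors, whose normal directions deviate from any fixed frame by angles of order $|\underline{\vec z_{j-1}}|/|\underline{\vec z_{j}}|$ (each $\underline{\vec z_j}$ must itself carry transversal components that large). Multiplying such an angle by $|\underline{\vec z_{k+d}}|$ gives an error of order $|\underline{\vec z_{k+d}}|\,|\underline{\vec z_{k+1}}|/|\underline{\vec z_{k+2}}|$, which already exceeds the largest target $|\underline{\vec z_{k+d-1}}|$, let alone the finest one $|\underline{\vec z_{k+3}}|$, for every $d\geq4$ and $\beta>0$. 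So prescribing components in a fixed frame $\underline{\vec u_1},\underline{\vec u_2},\vec v_3,\dots,\vec v_{d-1}$ cannot control the determinants in (d). The frame must be adapted at every step: the paper places the components of $\underline{\vec z_{k+d}}$ along the Gram--Schmidt basis $\vec b_{k+1},\dots,\vec b_{k+d-1}$ of the actual vectors $\underline{\vec z_{k+d-1}},\dots,\underline{\vec z_{k+1}}$, with magnitudes $3|\underline{\vec z_{i+2}}|$ for $i\leq k+d-3$ and $3|\underline{\vec z_{k+d-1}}|^{1+\beta}$ for the two leading directions. If your ``non-degenerate linear system'' is set up with respect to this moving frame, your construction collapses to the paper's; as written, with fixed $\vec v_n$, it does not deliver (d).

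A second, smaller issue: by forcing $\langle\underline{\vec u_1},\underline{\vec u_2}\rangle=0$ you turn (b) and (e) into sign conditions on lower-order terms, which you only assert ``can be prescribed''. The adapted frame is again what makes these checkable: in the paper, $\langle\underline{\vec z_{k+d-1}},\underline{\vec z_{k+d}}\rangle=-\nu_{k+d-1}|\vec b_{k+d-1}|^2<0$ is immediate because the $\vec b_{k+d-1}$-component is forced negative, and (e) reduces to $\mu_{k+1}>0$, verified by comparing $\vec b_{k+1}$-components. Note also that prescribing the asymptotic directions in advance is unnecessary: the paper never fixes them, but proves a posteriori that the even- and odd-indexed normalized subsequences are Cauchy, via $\langle\underline{\vec z_{k-2}},\underline{\vec z_{k}}\rangle\big/\big(|\underline{\vec z_{k-2}}|\,|\underline{\vec z_{k}}|\big)=1+O\big(|\underline{\vec z_{k-2}}|^{-\beta}\big)$, and that the two limits are distinct because the normalized inner product of \emph{consecutive} vectors tends to $1/\sqrt2$ rather than to $0$. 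Reworking your sketch along these lines essentially reproduces the paper's proof.
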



\begin{proof}
  Set $\vec z_i$, $i=0,\ldots,d-1$, to be equal to the $(i+1)$-th column of the matrix
  \[
  \begin{pmatrix}
    0      & 1      & 0      & \cdots & 0      \\
    0      & 0      & 1      & \cdots & 0      \\
    \vdots & \vdots & \vdots & \ddots & \vdots \\
    0      & 0      & 0      & \cdots & 1      \\
    1      & 0      & 0      & \cdots & 0
  \end{pmatrix}.
  \]
  Notice that, while $\pmb\alpha_0$ is not defined, $\pmb\alpha_1$ is well defined and equals $\vec z_0$.

  Supposing $\vec z_0,\ldots,\vec z_{k+d-1}$, $k\geq0$, are defined in such a way that every $d$ consecutive vectors form a basis of $\Z^d$ and
  \begin{equation} \label{eq:z_i_leq_z_i+1}
    |\underline{\vec z_{i-1}}|\leq|\underline{\vec z_i}|,\quad i=1,\ldots,k+d-1,
  \end{equation}
  let us define $\vec z_{k+d}$.

  \paragraph{Constructing $\vec z_{k+d}$.} Let $\vec b_{k+1},\ldots,\vec b_{k+d-1}$ be the orthogonal basis of $\R^{d-1}$ such that
  \[
  \begin{aligned}
    \vec b_i\perp\underline{\vec z_{i+1}},\ldots,\underline{\vec z_{k+d-1}},\qquad & i=k+1,\ldots,k+d-2, \\
    \langle\vec b_i,\underline{\vec z_i}\rangle=|\vec b_i|^2,\qquad & i=k+1,\ldots,k+d-1.
  \end{aligned}
  \]
  In other words, the $(d-1)$-tuple $\vec b_{k+d-1},\ldots,\vec b_{k+1}$ is obtained from $\underline{\vec z_{k+d-1}},\ldots,\underline{\vec z_{k+1}}$ by the Gram--Schmidt orthogonalization process. Particularly, $\vec b_{k+d-1}=\underline{\vec z_{k+d-1}}$.

  Then for any $\vec x,\vec y\in\R^{d-1}$ the semi-open parallelepiped $\vec x+\cP$, where
  \[\cP=\bigg\{ \sum_{i=k+1}^{k+d-2}\lambda_i\vec b_i-\lambda_{k+d-1}\vec b_{k+d-1}\ \bigg|\
                0<\lambda_i\leq1,\ i=k+1,\ldots,k+d-1 \bigg\},\]
  contains exactly one point of the affine lattice $\vec y+\underline\La_{k+1,d-1}$. Therefore, it is correct to define $\vec z_{k+d}$ to be the point of $-\vec z_{k}+\La_{k+1,d-1}$ such that
  \[\underline{\vec z_{k+d}}\in\vec b+\cP,\]
  where
  \[\vec b=
    \sum_{i=k+1}^{k+d-3}3|\underline{\vec z_{i+2}}|\frac{\vec b_i}{|\vec b_i|}+
    3|\underline{\vec z_{k+d-1}}|^{1+\beta}\bigg(\frac{\vec b_{k+d-2}}{|\vec b_{k+d-2}|}-\frac{\vec b_{k+d-1}}{|\vec b_{k+d-1}|}\bigg).\]
  Then
  \begin{equation} \label{eq:the_nu's}
    \begin{array}{l}
    \underline{\vec z_{k+d}}=
    \displaystyle\sum_{i=k+1_{\vphantom{|}}}^{k+d-2}\nu_i\vec b_i-\nu_{k+d-1}\vec b_{k+d-1},
\\
      0<\nu_i-3\dfrac{|\underline{\vec z_{i+2}}|}{|\vec b_i|_{\vphantom{|}}}\leq1\hskip18.5mm\text{for }i\leq k+d-3, \\
      0<\nu_i-3\dfrac{|\underline{\vec z_{k+d-1}}|^{1+\beta}}{|\vec b_i|}\leq1\qquad\text{for }i\geq k+d-2.
    \end{array}
  \end{equation}

  \paragraph{Providing the hypothesis of Lemma \ref{l:basic_lemma}.}
  Let us prove the following statements:
  \\
  \textup{(a$'$)} $\vec z_{k+1},\ldots,\vec z_{k+d}$ form a basis of $\Z^d$; \\
  \textup{(b$'$)} $\langle\underline{\vec z_{k+d-1}},\underline{\vec z_{k+d}}\rangle<0$; \\
  \textup{(c$'$)} $B_{k+d-1},D_{k+d-1,2}>3$, \\
  \phantom{\textup{(c$'$)}} $B_{k+d-1}\asymp D_{k+d-1,2}\asymp|\underline{\vec z_{k+d-1}}|^\beta$; \\
  \textup{(d$'$)}           $\det\underline\La_{k+d+1-n,n}>3|\underline{\vec z_{k+d+2-n}}|\det\underline\La_{k+d+1-n,n-1}$, \\
  \phantom{\textup{(d$'$)}} $\det\underline\La_{k+d+1-n,n}\asymp\ \,|\underline{\vec z_{k+d+2-n}}|\det\underline\La_{k+d+1-n,n-1},\ \ \ n=3,\ldots,d-1$ (if $d\geq4$); \\
  \textup{(e$'$)} $\langle\pmb\alpha_{k+2},\vec z_{k+1}\rangle\cdot\langle\pmb\alpha_{k+2},\vec z_{k}\rangle>0$.

  Statement \textup{(a$'$)} follows immediately from $\vec z_{k+d}\in-\vec z_{k}+\La_{k+1,d-1}$.

  By \eqref{eq:the_nu's} we have
  \begin{multline*}
    \langle\underline{\vec z_{k+d-1}},\underline{\vec z_{k+d}}\rangle=
    \sum_{i=k+1}^{k+d-2}\nu_i\langle\vec b_i,\vec b_{k+d-1}\rangle-\nu_{k+d-1}\langle\vec b_{k+d-1},\vec b_{k+d-1}\rangle= \\ =
    -\nu_{k+d-1}|\vec b_{k+d-1}|^2<0,
  \end{multline*}
  which provides \textup{(b$'$)}.

  By \eqref{eq:z_i_leq_z_i+1} and \eqref{eq:the_nu's} there exist $\lambda_k',\lambda_k'',\lambda_k'''\in(1,2\sqrt d)$ such that
  \begin{equation} \label{eq:lambda'}
    |\underline{\vec z_{k+d}}|=
    \bigg(\sum_{i=k+1}^{k+d-1}\nu_i^2|\vec b_i|^2\bigg)^{1/2}=
    \lambda_k'\cdot3|\underline{\vec z_{k+d-1}}|^{1+\beta};
  \end{equation}
  \begin{equation} \label{eq:lambda''}
  \begin{aligned}
    |\underline{\vec z_{k+d-1}}\wedge\underline{\vec z_{k+d}}| & =
    \bigg|\vec b_{k+d-1}\wedge\sum_{i=k+1}^{k+d-2}\nu_i\vec b_i\bigg|= \\ & =
    |\vec b_{k+d-1}|\bigg(\sum_{i=k+1}^{k+d-2}\nu_i^2|\vec b_i|^2\bigg)^{1/2}=
    \lambda_k''\cdot3|\underline{\vec z_{k+d-1}}|^{2+\beta};
    \hskip2.1mm
  \end{aligned}
  \end{equation}
  \begin{equation*}
  \begin{aligned}
    |\underline{\vec z_{k+d+1-n}}\wedge\ldots\wedge\underline{\vec z_{k+d}}| & =
    \bigg|\vec b_{k+d+1-n}\wedge\ldots\wedge\vec b_{k+d-1}\wedge\sum_{i=k+1}^{k+d-n}\nu_i\vec b_i\bigg|= \\ & =
    |\vec b_{k+d+1-n}\wedge\ldots\wedge\vec b_{k+d-1}|\bigg(\sum_{i=k+1}^{k+d-n}\nu_i^2|\vec b_i|^2\bigg)^{1/2}=
    \hskip15.8mm
    \\ & =
    \vphantom{\bigg|}
    \lambda_k'''\cdot3|\vec z_{k+d+2-n}|\cdot|\underline{\vec z_{k+d+1-n}}\wedge\ldots\wedge\underline{\vec z_{k+d-1}}| \\ &
    \hskip-39.3mm
    \vphantom{\bigg|}
    \text{for $n=3,\ldots,d-1$ (if $d\geq4$)}.
  \end{aligned}
  \end{equation*}
  This provides \textup{(c$'$)} and \textup{(d$'$)}.

  To prove \textup{(e$'$)} let us define $\mu_{k+1},\ldots,\mu_{k+d-1}$ by
  \begin{equation} \label{eq:the_mu's}
    \vec z_{k+d}=-\vec z_{k}+\sum_{i=k+1}^{k+d-1}\mu_i\vec z_i.
  \end{equation}
  We have $\langle\pmb\alpha_{k+2},\vec z_i\rangle=0$ for each $i=k+2,\ldots,k+d$, so,
  \[0=\langle\pmb\alpha_{k+2},\vec z_{k+d}\rangle=-\langle\pmb\alpha_{k+2},\vec z_{k}\rangle+\mu_{k+1}\langle\pmb\alpha_{k+2},\vec z_{k+1}\rangle,\]
  whence
  \[\langle\pmb\alpha_{k+2},\vec z_{k+1}\rangle\cdot\langle\pmb\alpha_{k+2},\vec z_{k}\rangle=\mu_{k+1}\langle\pmb\alpha_{k+2},\vec z_{k+1}\rangle^2.\]
  Since $\langle\pmb\alpha_{k+2},\vec z_{k+1}\rangle\neq0$, it suffices to show that $\mu_{k+1}>0$. By \eqref{eq:the_mu's} we have
  \[\langle\vec b_{k+1},\underline{\vec z_{k+d}}\rangle=
    \langle\vec b_{k+1},-\underline{\vec z_{k}}+\sum_{i=k+1}^{k+d-1}\mu_i\underline{\vec z_i}\rangle=
    -\langle\vec b_{k+1},\underline{\vec z_{k}}\rangle+\mu_{k+1}|\vec b_{k+1}|^2,\]
  while by \eqref{eq:the_nu's} we have
  \[\langle\vec b_{k+1},\underline{\vec z_{k+d}}\rangle=
    \langle\vec b_{k+1},\sum_{i=k+1}^{k+d-2}\nu_i\vec b_i-\nu_{k+d-1}\vec b_{k+d-1}\rangle=
    \nu_{k+1}|\vec b_{k+1}|^2.\]
  Hence, taking into account that $\nu_{k+1}>3|\underline{\vec z_{k+2}}|/|\vec b_{k+1}|$
  regardless of whether $d=3$ or $d\geq4$, we get
  \[\mu_{k+1}=\nu_{k+1}+
    \frac{\langle\vec b_{k+1},\underline{\vec z_{k}}\rangle}{|\vec b_{k+1}|^2}>
    3\frac{|\underline{\vec z_{k+2}}|}{|\vec b_{k+1}|}+
    \frac{\langle\vec b_{k+1},\underline{\vec z_{k}}\rangle}{|\vec b_{k+1}|^2}>
    \frac{|\vec b_{k+1}|\cdot|\underline{\vec z_{k}}|+\langle\vec b_{k+1},\underline{\vec z_{k}}\rangle}{|\vec b_{k+1}|^2}\geq0.\]

  Thus, \textup{(a$'$)}--\textup{(e$'$)} are valid, which implies that the sequence $(\vec z_k)_{k\in\N}$ satisfies the hypothesis of Lemma \ref{l:basic_lemma}, and we can set
  $\pmb\alpha=\lim_{k\to\infty}\pmb\alpha_k$.
%
%

  \paragraph{Asymptotic directions.}
  Let us prove that both sequences
  \begin{equation} \label{eq:even_and_odd}
    \bigg(\frac{\underline{\vec z_{k}}}{|\underline{\vec z_{k}}|}\bigg)_{k\equiv0\hskip-2mm\pmod2} 
    \quad\text{ and }\quad
    \bigg(\frac{\underline{\vec z_{k}}}{|\underline{\vec z_{k}}|}\bigg)_{k\equiv1\hskip-2mm\pmod2} 
  \end{equation}
  converge. Since
  \begin{equation} \label{eq:distance_via_inner_product}
    \bigg|\frac{\underline{\vec z_{k-2}}}{|\underline{\vec z_{k-2}}|}-\frac{\underline{\vec z_{k}}}{|\underline{\vec z_{k}}|}\bigg|^2=
    2\bigg(1-\frac{\langle\underline{\vec z_{k-2}},\underline{\vec z_{k}}\rangle}{|\underline{\vec z_{k-2}}|\cdot|\underline{\vec z_{k}}|}\bigg),
  \end{equation}
  we are to estimate $\langle\underline{\vec z_{k-2}},\underline{\vec z_{k}}\rangle$.

  Using the inductive definition of $\vec z_{k+d}$ along with relations \eqref{eq:z_i_leq_z_i+1}, \eqref{eq:the_nu's} one can easily prove the following asymptotic analogues of \eqref{eq:lambda'} and \eqref{eq:lambda''}:
  \begin{equation} \label{eq:asymp_instead_of_lambdas}
  \begin{aligned}
    |\underline{\vec z_{k+d}}| & =
    3\sqrt2|\underline{\vec z_{k+d-1}}|^{1+\beta}+O\big(|\underline{\vec z_{k+d-1}}|\big), \\
    |\underline{\vec z_{k+d-1}}\wedge\underline{\vec z_{k+d}}| & =
    3|\underline{\vec z_{k+d-1}}|^{2+\beta}+O\big(|\underline{\vec z_{k+d-1}}|^2\big).
  \end{aligned}
  \end{equation}
  Similarly,
  \begin{equation*}
  \begin{aligned}
    \hskip1.8mm
    \langle\underline{\vec z_{k+d-1}},\underline{\vec z_{k+d}}\rangle & =
    -\nu_{k+d-1}|\vec b_{k+d-1}|^2= \\ & =
    -3|\underline{\vec z_{k+d-1}}|^{2+\beta}+O\big(|\underline{\vec z_{k+d-1}}|^2\big).
  \end{aligned}
  \end{equation*}
  Hence
  \begin{multline*}
    \hskip-3.5mm
    \langle\underline{\vec z_{k+d-2}},\underline{\vec z_{k+d}}\rangle=
    \langle\underline{\vec z_{k+d-2}},\nu_{k+d-2}\vec b_{k+d-2}-\nu_{k+d-1}\vec b_{k+d-1}\rangle= \\ \hskip1mm =
    3|\underline{\vec z_{k+d-1}}|^{1+\beta}
    \bigg(\frac{\langle\underline{\vec z_{k+d-2}},\vec b_{k+d-2}\rangle}{|\vec b_{k+d-2}|}-
          \frac{\langle\underline{\vec z_{k+d-2}},\vec b_{k+d-1}\rangle}{|\vec b_{k+d-1}|}\bigg)+
    O\big(|\underline{\vec z_{k+d-2}}|\cdot|\underline{\vec z_{k+d-1}}|\big)= \\ \hskip7mm =
    3|\underline{\vec z_{k+d-1}}|^{\beta}
    \Big(|\underline{\vec z_{k+d-2}}\wedge\underline{\vec z_{k+d-1}}|-
         \langle\underline{\vec z_{k+d-2}},\underline{\vec z_{k+d-1}}\rangle\Big)+
    O\big(|\underline{\vec z_{k+d-2}}|\cdot|\underline{\vec z_{k+d-1}}|\big)= \\ \hskip33mm =
    3|\underline{\vec z_{k+d-1}}|^{\beta}
    \Big(6|\underline{\vec z_{k+d-2}}|^{2+\beta}
    +O\big(|\underline{\vec z_{k+d-2}}|^2\big)\Big)+
    O\big(|\underline{\vec z_{k+d-2}}|^{2+\beta}\big)= \vphantom{\bigg|} \\ =
    18|\underline{\vec z_{k+d-1}}|^{\beta}
    \Big(|\underline{\vec z_{k+d-2}}|^{2+\beta}
    +O\big(|\underline{\vec z_{k+d-2}}|^2\big)\Big),
  \end{multline*}
  i.e.
  \begin{multline*}
    \frac{\langle\underline{\vec z_{k+d-2}},\underline{\vec z_{k+d}}\rangle}{|\underline{\vec z_{k+d-2}}|\cdot|\underline{\vec z_{k+d}}|}=
    18\frac{|\underline{\vec z_{k+d-1}}|^{1+\beta}}{|\underline{\vec z_{k+d}}|}\cdot
    \frac{|\underline{\vec z_{k+d-2}}|^{1+\beta}}{|\underline{\vec z_{k+d-1}}|}\cdot
    \big(1+O\big(|\underline{\vec z_{k+d-2}}|^{-\beta}\big)\big)= \\ =
    \big(1+O\big(|\underline{\vec z_{k+d-1}}|^{-\beta}\big)\big)\big(1+O\big(|\underline{\vec z_{k+d-2}}|^{-\beta}\big)\big)^2=
    1+O\big(|\underline{\vec z_{k+d-2}}|^{-\beta}\big).
  \end{multline*}
  Returning to \eqref{eq:distance_via_inner_product} we get
  \[\bigg|\frac{\underline{\vec z_{k}}}{|\underline{\vec z_{k}}|}-\frac{\underline{\vec z_{k-2}}}{|\underline{\vec z_{k-2}}|}\bigg|^2=
    O\big(|\underline{\vec z_{k-2}}|^{-\beta}\big)=O(k^{-\beta}).\]
  Thus, indeed, both sequences \eqref{eq:even_and_odd} do converge. Moreover, the corresponding limit points are distinct, as by \eqref{eq:asymp_instead_of_lambdas} we have
  \[\lim_{k\to\infty}\frac{\langle\underline{\vec z_{k-1}},\underline{\vec z_{k}}\rangle}{|\underline{\vec z_{k-1}}|\cdot|\underline{\vec z_{k}}|}=\frac1{\sqrt2}\,.\]
  It remains to notice that since $L_{\pmb\alpha}(\vec z_k)\to0$ as $k\to\infty$, the convergence of the sequences \eqref{eq:even_and_odd} implies the convergence of
  \begin{equation*}
    \bigg(\frac{\vec z_{k}}{|\vec z_{k}|}\bigg)_{k\equiv0\hskip-2mm\pmod2} 
    \quad\text{ and }\quad
    \bigg(\frac{\vec z_{k}}{|\vec z_{k}|}\bigg)_{k\equiv1\hskip-2mm\pmod2}. 
  \end{equation*}
\end{proof}

\paragraph{Acknowledgements.}

The author is grateful to N.\,G.\,Moshchevitin, N.\,Technau and M.\,Widmer for fruitful discussions.

%
%
%
%
%

\end{document}